\newtheorem{Def}{Definition}[section]
\newtheorem{Lem}[Def]{Lemma}
\newtheorem{Thm}[Def]{Theorem}
\newtheorem{Rem}[Def]{Remark}
\newtheorem{Ass}[Def]{Assumption}
\newcommand{\dimY}{m}
\newcommand{\dimB}{k}
\newcommand{\dimX}{d}
\newcommand{\dimZ}{ {\dimY \times \dimB} }
\newcommand{\ds}{\, \mathrm{d}s }
\newcommand{\dBs}{\, \mathrm{d}W(s)}
\newcommand{\NewtonF}{\mathrm{F}}
\author{Dai Taguchi}
\address[D.~Taguchi]{Osaka University, 1-3, Machikaneyama-cho, Toyonaka, Osaka 560-8531, Japan}
\email{dai.taguchi.dai@gmail.com}
\author{Takahiro Tsuchiya$^{*}$}
\thanks{$*$ Corresponding author}
\address[T.~Tsuchiya]{School of Computer Science and Engineering, The University of Aizu}
\email[Corresponding author]{suci@probab.com}
\title{
Newton-Kantorovitch method for decoupled forward-backward stochastic differential equations
} 
\begin{document}
\begin{abstract}
We present and prove a Newton-Kantorovitch method for solving decoupled forward-backward stochastic differential equations 
(FBSDEs) involving smooth coefficients with uniformly bounded derivatives. 
As Newton's method is required a suitable initial condition to converge, we show that such initial conditions are solutions of a linear backward stochastic differential equation. 
In addition, we show that converges linearly to the solution.

\end{abstract}
\keywords{
Forward-Backward stochastic differential equations, \ 
Stochastic differential equations, \
Newton-Kantorovitch method, \
Rate of convergence}
\subjclass[2010]{
Primary {49M15}, \ 
Secondary {65C30}
}



\maketitle

\section{Introduction}
In this paper, we study a method of approximating non-Markovian and decoupled forward-backward stochastic differential equations (FBSDEs) of the following form for arbitrary $T > 0$:
\begin{equation}\label{fbsde1 decoupled}
\begin{split}
	X (t) &= X(0)+ \int_0^t  b (s,X(s))  \ds +\int_0^t \sigma (s, X (s))  \dBs, \\
	Y (t) &=  \varphi \left(X(T) \right) +\int_t^T f (s,X(s),Y(s),Z(s))  \ds - \int_t^T Z(s)\dBs, 
\end{split}
\end{equation}
where the solution triples $(X,Y,Z)\equiv  \left(X(t), Y(t), Z(t) \right)_{t \in [0, T]}$ 
take values in $\mathbb{R}^\dimX \times \mathbb{R}^{\dimY} \times \mathbb{R}^{\dimZ}$, 
$W$ is a $\dimB$-dimensional Wiener process, and $b, f$, $\sigma$, and $\varphi$ are measurable functions
that could in general be random and defined on a probability space. 

We propose a scheme for approximating such FBSDEs with random coefficients based on applying the Newton-Kantorovitch theory. 
Unlike a four-step scheme that relies on the Markov structure 
(see e.g., Ma, Protter and Yong \cite{Ma1994fourstepscheme}, and Delarue \cite{Delarue2002209}), 
this method allows us to find a non-Markovian approximation. 
In addition, as our approach is a type of contraction mapping \cite{antonelli1993, PardouxTang1999}, 
the approximation works for arbitrary large durations with smooth random coefficients, and with monotonicity conditions; 
for details,  see Hu and Peng \cite{HuPeng1995}, Peng and Wu \cite{PengWu1999Themethodofcontinuation}, and Yong \cite{Yong1997methodofcontinuation}. 
We also refer readers to \cite{Zhang2017zbMATH06738088, MaZhang2012} 
for more details about the uniqueness and the existence of FBSDE solutions. 

Most numerical algorithms for FBSDEs are based on time-space discretization schemes \cite{delarue2006}
for quasi-linear parabolic partial differential equations 
for coupled FBSDEs with monotonicity condition \cite{benderZhang2008}.
In contrast, to the best of our knowledge, very little has been published on state space discretization based approaches. 
Vidossich proved that the Chaplygin and Newton methods are equivalent for ordinary differential equations \cite{Vidossich1978188}, 
and Kawabata and Yamada studied a state space discretization based on Newton-Kantorovitch approach 
for stochastic differential equations (SDEs) \cite{KYnewton91zbMATH00019571}. 
Ouknine \cite{Ouknine1993doi:10.1080/17442509308833863} showed that the coefficients can be relaxed by a linear growth condition. 
Amano 
obtained a probabilistic second-order error estimate \cite{Amano12zbMATH06028640}.
Wrzosek \cite{Monika2012MR2967195} extended to stochastic functional differential equations. 

The aim of this paper is 
to formulate Newton-Kantorovitch  approximation of the decoupled multi-dimensional FBSDEs with random coefficients  and 
to show that it converges globally \eqref{Theorem: the linear convergence inequality FBSDE}
 as follows: 
\begin{Thm}\label{Theorem: the convergence of the Newton-Kantorovitch approximation process3}
Suppose that 
$b, \sigma, f$, and $\varphi$ are all $C^1$, their derivatives are uniformly bounded $(s,\omega)$-a.e.,
and $b, \sigma$, and  $f$ are square-integrable w.r.t. their time variables. 
Then, there exists a positive constant $C>0$ such that 
\begin{equation}\label{Theorem: the linear convergence inequality FBSDE}
\begin{split}
	\| (X -{X}_{n+1},Y - Y_{n+1}, Z - Z_{n+1} )\|  
	\leq  C2^{-n}
	\quad n \in \mathbb{N} \cup \{0\}. 
\end{split}
\end{equation}
\end{Thm}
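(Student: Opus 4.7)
The plan is to view the FBSDE \eqref{fbsde1 decoupled} as a root of a nonlinear operator $\NewtonF$ on the Banach space of adapted triples $(X,Y,Z)$ endowed with a weighted norm of the form $(\mathbb{E}[\sup_t e^{\beta t}|X(t)|^2] + \mathbb{E}[\sup_t e^{\beta t}|Y(t)|^2] + \mathbb{E}[\int_0^T e^{\beta s}|Z(s)|^2 \ds])^{1/2}$, derive the Newton update by linearizing $b, \sigma, f, \varphi$ around $(X_n,Y_n,Z_n)$, and show the resulting sequence contracts at rate $1/2$. Since the forward coefficients do not involve $(Y,Z)$, the Newton step decouples: $X_{n+1}$ solves a linear SDE whose drift and diffusion are the first-order Taylor expansions of $b(s,\cdot)$ and $\sigma(s,\cdot)$ at $X_n$, while $(Y_{n+1}, Z_{n+1})$ solves a linear BSDE whose driver is the linearization of $f$ in $(x,y,z)$ at $(X_n,Y_n,Z_n)$ and whose terminal condition is $\varphi(X_n(T)) + \varphi'(X_n(T))(X_{n+1}(T)-X_n(T))$. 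Existence and uniqueness of each iterate follow from classical theory of linear SDEs and BSDEs with bounded coefficients; the initial triple $(X_0, Y_0, Z_0)$ is taken so that $(Y_0, Z_0)$ solves the linear BSDE alluded to in the abstract.

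Writing $\Delta X_n = X - X_n$, $\Delta Y_n = Y - Y_n$, $\Delta Z_n = Z - Z_n$, I would subtract the Newton system from \eqref{fbsde1 decoupled} and expand each coefficient using the first-order Taylor formula with integral remainder. Because the derivatives of $b, \sigma, f, \varphi$ are merely uniformly bounded (not Lipschitz in higher order), each remainder must be estimated linearly by a constant times $(|\Delta X_n| + |\Delta Y_n| + |\Delta Z_n|)$. For the forward part, a standard Gronwall argument following Kawabata--Yamada \cite{KYnewton91zbMATH00019571} yields a self-contracting estimate of the form $\mathbb{E}[\sup_t |\Delta X_{n+1}(t)|^2] \leq \tfrac14 \mathbb{E}[\sup_t |\Delta X_n(t)|^2]$. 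For the backward part, applying It\^o's formula to $e^{\beta t}|\Delta Y_{n+1}(t)|^2$, invoking the uniform Lipschitz property of the linearized driver, and using Young's inequality to absorb the cross $Z$-term into the quadratic variation gives a recursion
\begin{equation*}
\|(\Delta Y_{n+1}, \Delta Z_{n+1})\|^2 \leq \tfrac14 \|(\Delta Y_n, \Delta Z_n)\|^2 + C_1 \|\Delta X_n\|^2 + C_2 \|\Delta X_{n+1}\|^2,
\end{equation*}
valid for sufficiently large $\beta$. Combining the two contractions on the joint weighted norm and iterating then delivers the claimed bound $\|(X-X_{n+1}, Y-Y_{n+1}, Z-Z_{n+1})\| \leq C \cdot 2^{-n}$.

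The main obstacle is producing the contraction factor $1/2$ uniformly in $T$. The naive Gronwall estimate applied to a BSDE on $[0,T]$ introduces an $e^{CT}$ multiplicative constant that would preclude a horizon-independent contraction and ultimately force either a small-time restriction or a partition of $[0,T]$ whose constants degrade in $n$. I expect to circumvent this through the weighted norm $e^{\beta t}$ with $\beta$ chosen larger than an explicit multiple of the squared derivative bounds of $b, \sigma, f, \varphi$: the $\beta$-shift trades the time-horizon growth for a fixed prefactor in front of the remainder terms, which is what keeps the $1/2$ contraction valid for arbitrary $T$. Verifying that this weighting can be performed jointly with the Newton linearization of the terminal datum $\varphi(X(T))$, and that the feedback term $C_2 \|\Delta X_{n+1}\|^2$ coupling the forward and backward errors does not spoil the induction base provided by the linear-BSDE initializer, will be the most delicate part of the argument.
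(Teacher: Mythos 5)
Your overall architecture matches the paper's: linearize $b,\sigma,f,\varphi$ around the current iterate, note that the Newton step is a linear decoupled FBSDE (hence well-posed), subtract from \eqref{fbsde1 decoupled}, control the Taylor remainders by the uniform derivative bounds, treat the forward error by Gronwall and the backward error by It\^o applied to $e^{\alpha t}|\overline{Y}_{n+1}(t)|^2$ with a Young parameter $\delta$ tuned so that the remainder contributes a factor $\epsilon$, and finally feed the forward estimate into the backward recursion $a_{n+1}\leq \epsilon a_n + b_n$. One remark: the theorem holds for an arbitrary initial triple with $X_0(0)=X(0)$; the linear BSDE mentioned in the abstract is what each iterate solves, not a required initializer, so no ``induction base'' issue arises there.

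There is, however, one genuine gap in your treatment of the forward error. You claim that ``a standard Gronwall argument yields $\mathbb{E}[\sup_t|\Delta X_{n+1}(t)|^2]\leq \tfrac14\mathbb{E}[\sup_t|\Delta X_n(t)|^2]$,'' and you propose to rescue the $T$-dependence with an exponential weight. Gronwall actually yields $\|\Delta X_{n+1}\|^2\leq C_0\int_0^T\mathbb{E}[\sup_{t\leq s}|\Delta X_n(t)|^2]\,\mathrm{d}s\leq C_0T\,\|\Delta X_n\|^2$, and $C_0T$ need not be below $1$. Moreover, the exponential weight cannot repair this for the forward equation the way it does for the backward one: in the It\^o expansion of $|\Delta X_{n+1}|^2$ the diffusion remainder enters \emph{quadratically} through the bracket term $|\sigma'_x\Delta X_{n+1}+\mathrm{R}_\sigma\Delta X_n|^2$, so its coefficient is pinned at a fixed multiple of $\|\sigma'\|_\infty^2$ and there is no Young parameter to shrink it (in the BSDE the remainder enters linearly via $2\langle\overline{Y}_{n+1},\mathrm{R}_f h_n\rangle$, which is why the $\delta$--$\alpha$ trade works there). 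The correct resolution — and the one the paper uses, following Kawabata--Yamada — is to keep the Gronwall estimate in integral form and iterate it, obtaining the factorial bound $\|\Delta X_{n+1}\|^2\leq \frac{(TC_0)^{n+1}}{(n+1)!}\|\Delta X_0\|^2\leq \epsilon^{n+1}e^{C_0T/\epsilon}\|\Delta X_0\|^2$. Since the constant $C$ in the statement is allowed to depend on $T$, no horizon-uniform per-step contraction is needed; the factorial decay dominates any geometric rate, the term $b_n=c_1\|\Delta X_0\|^2 C_0^n/n!$ in the backward recursion sums to $\epsilon^{n+1}\epsilon^{-1}e^{C_0/\epsilon}c_1\|\Delta X_0\|^2$, and setting $\epsilon=1/2$ gives the claim. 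With that substitution your argument closes.
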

As pointed out in \cite{KarouiPengel1997backward, BahlaliEddahbiandOuknine2017AOP}, 
solutions $(X,Y,Z)$ of decoupled FBSDEs can be viewed as fixed points of a mapping. 
Here, we consider an alternative mapping $\NewtonF_{\varphi} : \mathbb{S}^{2}_{\dimX} \times \mathbb{S}^{2}_{\dimY} \times \mathbb{H}^2 \to \mathbb{S}^{2}_{\dimX} \times \mathbb{L}^2_T$ inspired by a one-dimensional analog of \cite{KYnewton91zbMATH00019571, Niwa2003}, as follows. 
For given $\varphi$ and $u=(x, y, z)\in \mathbb{S}^{2}_{\dimX} \times \mathbb{S}^{2}_{\dimY} \times \mathbb{H}^2$, 
we define 
\begin{equation}
\begin{split}\label{Def: F in BSDEs}
	\NewtonF_{\varphi} (u)(t)
	= 
	\begin{pmatrix}
	\displaystyle x(t) - x(0) -\int_0^t b(s, x(s)) \ds - \int_0^t  \sigma(s,x(s)) \dBs\\
	\displaystyle y (t) - \varphi(x(T))  -\int_t^T f (s, u(s)) \ds + \int_t^T  z (s) \dBs 
	\end{pmatrix}^\top
	.
\end{split}
\end{equation}
Then, for any $(X_{0}, Y_{0}, Z_{0}) \in \mathbb{S}^{2}_{\dimX} \times \mathbb{S}^{2}_{\dimY} \times \mathbb{H}^2$ with $X_0(0)=X(0)$ and assuming that the driver $f$ is smooth, the Newton-Kantorovitch approximation process is given by 
\begin{equation}
\begin{split}\label{equation: backward Newton-Kantorovitch def}
	\NewtonF_{\varphi}(X_{n},Y_{n},Z_{n}) +\NewtonF_{\varphi}' (X_{n},Y_{n},Z_{n}) (X_{n+1}-X_{n},Y_{n+1}-Y_{n},Z_{n+1}-Z_{n}) =0, 
	\\ X_{n+1}(0)=X (0), 
\end{split}
\end{equation}
for $n \in \mathbb{N} \cup \{0\}$,  
where $\NewtonF'_{\varphi}$ stands for G\^ateaux derivative. 
This sequence is well-posed iff a unique system of linear backward stochastic differential equations (BSDEs) formally by Theorem \ref{Theorem: linear BSES characerlization}. 
Note that, if the given duration $T$ is arbitrarily, then, even for linear FBSDEs with constant coefficients, 
the necessary and sufficient conditions become more complicated 
when the diffusion coefficients also depend on $z$, 
as was pointed out by Ma, Wu, Zhang and Zhang \cite{FBSDEMaWuZhangZhangzbMATH06464848}. 
In this paper, we focus on decoupled FBSDEs and obtain the convergence result, Theorem \ref{Thm: Newton-Kantorovitch BSDE}. 

The rest of this paper is structured as follows. 
Section \ref{Preliminaries} introduces the notations and assumptions used. 
Section \ref{Newton-Kantorovitch  scheme for BSDEs} is devoted to 
formulating the Newton-Kantorovitch approximation process. 
Finally, 
Section \ref{Convergence of the Newton-Kantorovitch  approximation process for BSDEs} 
proves the main theorem for the decoupled FBSDEs. 

\section{Preliminaries}\label{Preliminaries}
For $x, y \in \mathbb{R}^\dimX$, $|x|$ denotes the Euclidian norm 
and $\langle x, y \rangle$ denotes the inner product. 
Matrixes size of $\dimZ$ will be represented as an element $y \in \mathbb{R}^{\dimZ}$,  
whose Euclidean norms are also given by $|y| = \sqrt{y y^\top}$ and for which $\langle y, z\rangle = \mathrm{trace}(yz^\top)$ 
where $A^\top$ is the transpose matrix of $A$. 
In this paper, we only consider the derivatives with respect to the space variable. 

We also use the following notation, based on that in \cite{Pardoux2014zbMATH06276084}. 
Let $(\Omega , \mathcal{F}, \mathbb{P})$ be a complete probability space and $\left( \mathcal{F}_t \right)_{t \geq 0 }$ be a Brownian filtration. 
$\mathcal{P} =\mathcal{P}(\left( \mathcal{F}_t \right)_{t \geq 0 })$ is the $\sigma$-algebra of 
the progressively measurable subset 
$A \subset \Omega \times [0, \infty)$ such that, for all $t \geq 0$, $A \cap ( [0, t] \times \Omega) \in \mathcal{B} [0,t] \otimes  \mathcal{F}_t $. 
Let $T>0$ be a fixed, and final, deterministic time. 
For $\dimY, \dimB \in \mathbb{N}$, we define 
\begin{align*}
	\mathbb{L}^{2}
	&=  \left\{ \xi : \Omega  \to \mathbb{R}^\dimY 
	\text{ is $\mathcal{F}_T$-measurable and }
	\| \xi\|_{\mathbb{L}^2}= \left\{ \mathbb{E} \left[ |\xi |^2 \right] \right\}^{\frac{1}{2}}  <\infty \right\},\\
	\mathbb{L}^{2}_{T}
	&=  \left\{ Y : \Omega \times [0,T]  \to \mathbb{R}^\dimY
	\text{ continuous}
	:Y(t) \in m\mathcal{F}_T, \forall t \in [0,T],~
	\| Y \|_{\mathbb{L}_T^{2}}
		 <\infty \right\},\\
	\mathbb{S}^{2}_{\dimY}
	&=  \left\{ Y : \Omega \times [0,T]  \to \mathbb{R}^\dimY 
	\text{ continuous, adapted}:
		\| Y \|_{\mathbb{S}^{2}_{\dimY}}<\infty 
		 \right\},
\\ 	\mathbb{H}^{2}
	&=  \left\{ Z : \Omega \times [0,T]  \to \mathbb{R}^{\dimZ}
	\text{ continuous, adapted}:
	{\| Z \|}_{\mathbb{H}^2}
	<\infty  \right\},
\end{align*}
where the norms $\|\cdot\|_{\mathbb{L}_T^{2}}$, $\|\cdot\|_{\mathbb{S}^{2}_{\dimY}}$, and ${\| \cdot \|}_{\mathbb{H}^2}$ are defined by
\begin{align*}
	\| Y \|_{\mathbb{L}_T^{2}}
	=\| Y \|_{\mathbb{S}^{2}_{\dimY}}
	=\left\{ \mathbb{E} \left[  \sup_{0 \leq s \leq T} |Y(s)|^2 \right] \right\}^{\frac{1}{2}},
	\quad
	{\| Z \|}_{\mathbb{H}^2}
	= \left\{  \mathbb{E} \left[ \int_0^T  |Z(s)|^2 \ds   \right] \right\}^{\frac{1}{2}}.
\end{align*}
For the sake of simplicity, we also write the operator norm of the operator $A$ as $\| A \|$.
The Banach spaces $\mathbb{S}^{2}_{\dimY} \times \mathbb{H}^{2}$ and $\mathbb{S}^{2}_{\dimX}$ are defined by 
\[
	\| (Y,Z)\|^2 = \|Y\|_{\mathbb{S}^{2}_{\dimY}}^2+\| Z\|_{\mathbb{H}^2}^2, \quad 
	\| X\|^2 = \|X\|_{\mathbb{S}^{2}_{\dimX}}^2. 
\]
For $\alpha \in \mathbb{R}$, 
we introduce the weighted norm 
\[
	\| (Y,Z)\|_{\alpha}^2 =
	\mathbb{E} \left[  \sup_{0 \leq s \leq T}e^{\alpha s} |Y(s)|^2 \right]
	+ \mathbb{E} \left[ \int_0^T  e^{\alpha s}|Z(s)|^2 \ds   \right].
\]
For $p,q \in \mathbb{N}$, $C^k (\mathbb{R}^p, \mathbb{R}^q)$ is the set of functions of class $C^k$ from $\mathbb{R}^p$ to $\mathbb{R}^q$, 
and $C^k_b (\mathbb{R}^p, \mathbb{R}^q)$ is the subset of these functions whose partial derivatives of order at most of the $k$ values are bounded.  
When the domain and range dimensions are clear based on context and 
when there is no risk of confusion, we often eliminate the spaces to simplify the notation. 

For a smooth $g$ such that $g(\cdot,\cdot, \cdot) \in C^1 (\mathbb{R}^p \times \mathbb{R}^q \times \mathbb{R}^r , \mathbb{R}^q)$, it is convenient to obtain a concrete representation of ${g}' : \mathbb{R}^{p} \times \mathbb{R}^{q} \times \mathbb{R}^{r} \to \mathbb{R}^{q}$.
The Fr\'echet derivative  at $u\in \mathbb{R}^{p} \times \mathbb{R}^{q} \times \mathbb{R}^{q}$ is the matrix representation of $g'(u)$ \cite[page 60]{Ortega1970MR0273810} 
and it can be obtained using the Jacobian matrix: 
\begin{equation}
\begin{split}\label{notation: The Mean value theorem in the Banach space}
	g(u+\Delta u )-g(u)={g}'  (u)\Delta u +\mathrm{R}_{g}\left(u \right)\Delta u , \quad \Delta u  \in \mathbb{R}^{p} \times \mathbb{R}^{q} \times \mathbb{R}^{r}
\end{split}
\end{equation}
where the Lagrange remainder is given by 
\begin{align*}
\mathrm{R}_{g} (u)\Delta u  =
	\left( \int_0^1  \left\{ {g}'  \left(u+ \theta \Delta u \right) - {g}'  (u) \right\}\mathrm{d}\theta \right)\Delta u.
\end{align*}
Notice that, for all $u=(x,y,z)^\top , \ \Delta u = (\Delta x,\Delta y ,\Delta z )^\top\in\mathbb{R}^p \times \mathbb{R}^q \times \mathbb{R}^{r}$, 
\begin{equation}\label{notation: linearity of the Jacobian matrix}
	{g}' (u)\Delta u= {g}'_x (u) \Delta x +{g}'_y (u) \Delta y  +{g}'_z (u) \Delta z, 
\end{equation}
where we obtain set ${g}'_x (u):\mathbb{R}^p \to \mathbb{R}^p \times \mathbb{R}^p$ such that ${g}'_x (u) \Delta x =\left( \frac{\partial}{\partial x_j} g^{(i)} (x,y,z) \Delta x^{(i)}   \right)_{i \leq p, \ j \leq p}$ where $\Delta x^{(i)}$ stands for the $i$th component for $i \leq p$. 
We define ${g}'_y$ and ${g}'_z$, similarly. 
Finally, for all $s \in [0,T]$, we define 
\begin{equation*}
	 \| {g}'  \|_{\infty}
	  = \sup_{(s,x,y,z) \in [0,T] \times \mathbb{R}^{p} \times \mathbb{R}^{q} \times \mathbb{R}^{r} } | {g}'(s,x,y,z) |, 
\end{equation*}
for $g(s, \cdot, \cdot, \cdot) \in C^1_b (\mathbb{R}^p \times \mathbb{R}^q \times \mathbb{R}^r, \mathbb{R}^q)$.

\section{Newton-Kantorovitch scheme for FBSDEs}\label{Newton-Kantorovitch  scheme for BSDEs}
In this section, we formulate the Newton-Kantorovitch scheme of the following system of FBSDEs: 
\begin{equation}\label{bsde0}
\begin{split}
X (t) &= X(0)+ \int_0^t  b (s,X(s))  \ds +\int_0^t \sigma (s, X (s))  \dBs, \\
Y (t) &=  \varphi \left(X(T) \right) +\int_t^T f (s,X(s),Y(s),Z(s))  \ds - \int_t^T Z(s)\dBs, \\
\end{split}
\end{equation}
where the solutions $(X,Y,Z) = \left(X(t),Y(t), Z(t) \right)_{t \in [0, T]}$ take values in $\mathbb{R}^{\dimX} \times \mathbb{R}^{\dimY} \times \mathbb{R}^{\dimZ}$, $W$ is the $\dimB$-dimensional Wiener process, and the (progressively) measurable functions $b$, $f$, $\sigma$ and $\varphi$ are defined on the probability space $(\Omega, \mathcal{F}, \left(\mathcal{F}_t\right)_{t \geq 0}, \mathbb{P})$.

In this paper, we also assume the following. 
\begin{itemize}
\item $X(0): \Omega \to \mathbb{R}^\dimX$ is an $\mathcal{F}_0$-measurable and square-integrable random vector. 
\item $b : [0,T] \times \Omega \times \mathbb{R}^{\dimX}  \to  \mathbb{R}^{\dimX}$ 
	is $\mathcal{P} \otimes \mathcal{B} (\mathbb{R}^{\dimX})$-measurable. 
\item $\sigma: [0,T]  \times \Omega  \times \mathbb{R}^{\dimX}  \to  \mathbb{R}^{\dimX \times \dimB} $
	is 
	$\mathcal{P} \otimes \mathcal{B} (\mathbb{R}^{\dimX})$-measurable. 
\item There exists a constant $C>0$ such that, for any $x \in \mathbb{R}^{\dimX}$,
\begin{align*}
	| b (s,x) | &\leq |b (s,0) |+ C |x|, \quad \text{$(s,\omega)$-a.e., }  \\
	| \sigma (s,x) | &\leq |\sigma (s,0)|+C|x|, \quad \text{$(s,\omega)$-a.e. }  
\end{align*}
\item $\varphi : \Omega \times \mathbb{R}^{\dimX} \to \mathbb{R}^{\dimY}$ is $\mathcal{F}_T \otimes \mathcal{B} (\mathbb{R}^{\dimX})$-measurable.
\item $f: [0,T] \times \Omega \times \mathbb{R}^{\dimX} \times  \mathbb{R}^{\dimY} \times \mathbb{R}^{\dimZ} \to  \mathbb{R}^{\dimY}$ 
	is $\mathcal{P} \otimes \mathcal{B} (\mathbb{R}^{\dimY}) \otimes \mathcal{B} ( \mathbb{R}^{\dimZ} )$-measurable. 
\item 
There exists a constant $C>0$ such that for any $(x,y,z) \in \mathbb{R}^{\dimX} \times \mathbb{R}^{\dimY} \times \mathbb{R}^{\dimZ}$,
\begin{align*}
	| f (s,x,y,z) | &\leq |f (s,0,0,0) |+ C \left( |x|+|y|+ |z| \right),\quad \text{$(s,\omega)$-a.e. }\\
	| \varphi(x)| &\leq |\varphi(0)| + C|x|,\quad \text{$\omega$-a.e. }
\end{align*}
\end{itemize}
In addition, in the BSDE,
\begin{equation*}
\begin{split}
Y (t) &=  \xi +\int_t^T f (s,Y(s),Z(s))  \ds - \int_t^T Z(s)\dBs, 
\end{split}
\end{equation*}
we replace the above assumption on $\varphi$ with the following condition: 
\begin{itemize}
\item $\xi: \Omega \to \mathbb{R}^\dimY$ is an $\mathcal{F}_T$-measurable and square integrable random vector; 
$\xi \in \mathbb{L}^2$. 
\end{itemize}

We also introduce the following assumptions. 
\begin{Ass}\label{ass0}
	$b(\cdot,0),
	\sigma(\cdot,0),
	f(\cdot ,0,0,0) \in \mathbb{H}^2$, i.e., 
\begin{align*}
	\mathbb{E} \left[ \int_0^T \left\{ | b (s,0) |^2 + | \sigma (s,0) |+| f (s,0,0,0) |^2 \right\}\ds \right] <\infty
\end{align*}
and $\varphi(0) \in \mathbb{L}^2$.
\end{Ass}

\begin{Ass}\label{ass0+1}
$b (s, \cdot) \in C^1_b (\mathbb{R}^\dimX , \mathbb{R}^\dimX  )$,
$\sigma (s,\cdot )  \in C^1_b (\mathbb{R}^\dimX , \mathbb{R}^{\dimX \times \dimB}  )$, $f(s, \cdot, \cdot, \cdot) \in C^1_b (\mathbb{R}^\dimX \times \mathbb{R}^\dimY \times \mathbb{R}^\dimZ, \mathbb{R}^\dimY  ) , \ 
\text{$(s,\omega)$-a.e.}$
and $\varphi \in C^{1}_{b}(\mathbb{R}^{\dimX}, \mathbb{R}^{\dimY})$, $\omega$-a.e.
\end{Ass}

For given $\varphi$, we consider the operator $\NewtonF_{\varphi}$ (defined by \eqref{Def: F in BSDEs}), namely, 
\begin{equation*}
\begin{split}
	\NewtonF_{\varphi} (u)(t)
	= 
	\begin{pmatrix}
	\displaystyle x(t) - x(0) -\int_0^t b(s, x(s)) \ds - \int_0^t  \sigma(s,x(s)) \dBs\\
	\displaystyle y (t) - \varphi(x(T))  -\int_t^T f (s, u(s)) \ds + \int_t^T  z (s) \dBs 
	\end{pmatrix}^\top, 
\end{split}
\end{equation*}
for $u=(x,y, z)\in \mathbb{S}^{2}_{\dimX} \times \mathbb{S}^{2}_{\dimY} \times \mathbb{H}^2$.
As an immediate consequence of the result given Lemma 3.1 of \cite{KYnewton91zbMATH00019571}, 
we obtain the following corresponding result for FBSDEs. 

\begin{Lem}\label{op_F}
If Assumption \ref{ass0} holds, then the operator $\NewtonF_{\varphi}$ defined by \eqref{Def: F in BSDEs} maps the space $ \mathbb{S}^{2}_{\dimX} \times \mathbb{S}^{2}_{\dimY} \times \mathbb{H}^2$ into $ \mathbb{S}^{2}_{\dimX} \times \mathbb{L}_T^{2}$ and $t \mapsto \NewtonF_{\varphi}(u) (t)$ is a continuous modification for $u \in \mathbb{S}^{2}_{\dimX} \times \mathbb{S}^{2}_{\dimY} \times \mathbb{H}^2$.
\end{Lem}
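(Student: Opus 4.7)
The plan is to show the two coordinates of $\NewtonF_{\varphi}(u)$ separately, since the first coordinate lies in $\mathbb{S}^{2}_{\dimX}$ (continuous, adapted, finite sup-$L^{2}$ norm) while the second lies in $\mathbb{L}_{T}^{2}$ (only continuous with finite sup-$L^{2}$ norm). Throughout I will use three ingredients: the linear growth bounds on $b, \sigma, f, \varphi$; the integrability condition provided by Assumption \ref{ass0}; and the definitions of the norms on $\mathbb{S}^{2}_{\dimX}, \mathbb{S}^{2}_{\dimY}, \mathbb{H}^{2}$.

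For the first coordinate, I would apply the triangle inequality inside $\sup_{t \in [0,T]}$ to split
\[
    x(t) - x(0) - \int_{0}^{t} b(s, x(s))\ds - \int_{0}^{t} \sigma(s, x(s))\dBs
\]
into three pieces. The term $x(\cdot) - x(0)$ is handled by $\|x\|_{\mathbb{S}^{2}_{\dimX}} < \infty$. For the drift, Cauchy--Schwarz gives $\sup_{t}|\int_{0}^{t} b(s,x(s))\ds|^{2} \le T \int_{0}^{T} |b(s,x(s))|^{2}\ds$, and then the linear growth $|b(s,x)| \le |b(s,0)| + C|x|$ combined with $x \in \mathbb{S}^{2}_{\dimX}$ and $b(\cdot,0) \in \mathbb{H}^{2}$ controls the expectation. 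For the stochastic integral, Doob's maximal inequality (or BDG) yields $\mathbb{E}[\sup_{t}|\int_{0}^{t} \sigma(s,x(s))\dBs|^{2}] \le 4\,\mathbb{E}[\int_{0}^{T} |\sigma(s,x(s))|^{2}\ds]$, and the same linear-growth estimate closes the bound. Adaptedness follows because $x$ is adapted and the Itô integral of an adapted integrand is adapted.

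For the second coordinate
\[
    y(t) - \varphi(x(T)) - \int_{t}^{T} f(s, u(s))\ds + \int_{t}^{T} z(s)\dBs,
\]
I proceed analogously. The term $\|y\|_{\mathbb{S}^{2}_{\dimY}}$ is finite by hypothesis. The endpoint term satisfies $\mathbb{E}|\varphi(x(T))|^{2} \le 2\mathbb{E}|\varphi(0)|^{2} + 2C^{2}\mathbb{E}|x(T)|^{2}$, which is finite by Assumption \ref{ass0} and $x \in \mathbb{S}^{2}_{\dimX}$. The integral of $f$ is handled by Cauchy--Schwarz and the linear growth hypothesis on $f$ applied to $u \in \mathbb{S}^{2}_{\dimX} \times \mathbb{S}^{2}_{\dimY} \times \mathbb{H}^{2}$. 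Writing $\int_{t}^{T} z\dBs = \int_{0}^{T} z\dBs - \int_{0}^{t} z\dBs$ and applying Doob to each piece gives the sup-$L^{2}$ control via $\|z\|_{\mathbb{H}^{2}}$. Note that this component need not be adapted, because $\varphi(x(T))$ is only $\mathcal{F}_{T}$-measurable; this is precisely why the target space is $\mathbb{L}_{T}^{2}$ rather than $\mathbb{S}^{2}_{\dimY}$.

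Finally, for the continuous modification, the Lebesgue integrals $t \mapsto \int_{0}^{t}b(s,x(s))\ds$ and $t \mapsto \int_{t}^{T} f(s,u(s))\ds$ are continuous in $t$ by dominated convergence (the integrands are in $L^{1}([0,T])$ $\mathbb{P}$-a.s., by the same linear-growth estimates), while the stochastic integrals admit continuous modifications by the standard construction of the Itô integral. Since $x, y$ themselves are continuous in $t$, the sum is a continuous modification. I do not anticipate a genuine obstacle here; the only point requiring attention is bookkeeping the split $\int_{t}^{T} = \int_{0}^{T} - \int_{0}^{t}$ for the backward stochastic integral so that Doob's inequality applies to a martingale starting at zero.
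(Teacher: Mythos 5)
Your proposal is correct and follows essentially the same route as the paper: linear growth plus Assumption \ref{ass0} to control the Lebesgue integrals, Doob's maximal inequality (with the It\^o isometry) for the stochastic integrals, and the standard continuous modification of the It\^o integral for continuity. Your explicit remark on why the second coordinate lands only in $\mathbb{L}_T^{2}$ (non-adaptedness coming from $\varphi(x(T))$) is a point the paper leaves implicit, but it does not change the argument.
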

\begin{proof}
For any $u=(x,y,z) \in \mathbb{S}^{2}_{\dimX} \times \mathbb{S}^{2}_{\dimY} \times \mathbb{H}^2$, it follows, from $\|x\|_{\mathbb{S}_{\dimX}^2}<\infty$ and $\| z \|^2_{\mathbb{H}^2}<\infty$, that the It\^o integrals $t \mapsto \int_0^t \sigma(s,x(s)) \dBs$ and $t \mapsto \int_t^T z(s)  \dBs$, respectively, are continuous modifications. 
By the Jensen inequality, we obtain 
\begin{align*}
	&\mathbb{E} \left[  \int_0^t \left| b (s, x(s))\right|^2  \ds \right]
	\leq 2^{}\mathbb{E} \left[  \int_0^t | b (s, 0) |^2 \ds \right]+2^{}C^2T  \|x\|^2_{\mathbb{S}^{2}_{\dimX}}<\infty
\end{align*}
and
\begin{align*}
	&\mathbb{E} \left[  \int_t^T \left| f (s, x(s), y(s), z(s))\right|^2  \ds \right]
	\\&\leq 4^{}\mathbb{E} \left[  \int_t^T | f (s, 0, 0, 0) |^2 \ds \right]+4^{}C^2  \left( T  \{\|x\|^2_{\mathbb{S}^{2}_{\dimX}}+\|y\|^2_{\mathbb{S}^{2}_{\dimY}}\}+ \| z\|^2_{\mathbb{H}^2} \right)
	<\infty. 
\end{align*}
By Doob's inequality and It\^o's isometry property,  there exists a $c>0$ such that
\begin{align*}
	&\mathbb{E} \left[ \sup_{0 \leq t \leq T} \left| \int_0^t \sigma(s,x(s))  \dBs \right|^2  \right] \\
	&\leq c \mathbb{E} \left[ \left| \int_0^T \sigma(s,x(s))  \dBs \right|^2  \right]\leq 2c \mathbb{E} \left[ \int_0^T | \sigma(s,0)|^2 \ds \right]+2c C^2 T\|x\|^2_{\mathbb{S}_{\dimX}}< \infty
\end{align*}
and
\begin{align*}
	&\mathbb{E} \left[ \sup_{0 \leq t \leq T} \left| \int_t^T z(s)  \dBs \right|^2  \right]
	\\&\leq c \mathbb{E} \left[ \left| \int_0^T z(s)  \dBs \right|^2  \right]
	= c  \mathbb{E} \left[ \int_0^T | z(s) |^2 \ds \right]< \infty. 
\end{align*}
Using the Jensen inequality, we further obtain that 
\begin{align*}
&\frac{1}{9} 
 \mathbb{E} \left[ \sup_{0\leq t \leq T}| \NewtonF_{\varphi}(u)(t)|^2 \right]
\\&\leq
\|x\|^2_{\mathbb{S}^{2}_{\dimX}}
+\mathbb{E}[|x(0)|^2]
+\mathbb{E} \left[  \int_0^t \left| b (s, x(s))\right|^2  \ds \right]
+\mathbb{E} \left[ \sup_{0 \leq t \leq T} \left| \int_0^t \sigma(s,x(s))  \dBs \right|^2  \right] \\
&+\|y\|^2_{\mathbb{S}^{2}_{\dimY}}
+\mathbb{E}\left|[\varphi(0)|^2 + C|x(T)|^2 \right]
\\&+\mathbb{E}\left[ \left| \int_t^T f (s, x(s), y(s), z(s)) \ds \right|^2 \right]
+ \mathbb{E}\left[\sup_{0 \leq t \leq T} \left| \int_t^T z(s)  \dBs \right|^2  \right],
\end{align*}
which is bounded by the above estimates. 
This completes the proof.
\end{proof}

The following lemma shows that the operator $\NewtonF_{\varphi}: \mathbb{S}^{2}_{\dimX} \times \mathbb{S}^{2}_{\dimY} \times \mathbb{H}^2 \to \mathbb{S}_{\dimX}^2 \times \mathbb{L}_T^{2}$ has a G\^ateaux derivative.
\begin{Lem}\label{Lem: the Gateaux derivative on BSDEs} 
If Assumptions \ref{ass0} and \ref{ass0+1} hold, then, for all $u=(x,y,z) \in \mathbb{S}^{2}_{\dimX} \times \mathbb{S}^{2}_{\dimY} \times \mathbb{H}^2$, 
the G\^ateaux derivative $\NewtonF_{\varphi}'(u): \mathbb{S}^{2}_{\dimX} \times \mathbb{S}^{2}_{\dimY} \times \mathbb{H}^2 \to \mathbb{S}_{\dimX}^2 \times \mathbb{L}_T^2$ of $\NewtonF_{\varphi}$ at $u \in \mathbb{S}^{2}_{\dimX} \times \mathbb{S}^{2}_{\dimY} \times \mathbb{H}^2$ in the direction $\overline{u}=(\overline{x},\overline{y},\overline{z})\in \mathbb{S}^{2}_{\dimY} \times \mathbb{H}^2$ exists and is given for any $t \in [0,T]$, by 
\begin{align}\label{G_derivative}
	\NewtonF_{\varphi}'(u) \overline{u}(t)
	=
	\begin{pmatrix}
	\displaystyle 
	\overline{x} (t)-\overline{x}(0)
	-\int_0^t b'_{x}  (s,x(s)) \overline{x} (s) \ds - \int_0^t \sigma'_{x} (s,x(s)) \overline{x} (s)  \dBs \\
	\displaystyle \overline{y}(t)
	-\varphi'(x(T)) \overline{x}(T)
	-\int_t^T {f}'  (s,u(s)) h (s) \ds + \int_t^T  \overline{z}(s)  \dBs
	\end{pmatrix}^{\top}.
\end{align}
\end{Lem}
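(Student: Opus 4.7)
The plan is to verify \eqref{G_derivative} directly from the definition
$\NewtonF_{\varphi}'(u)\overline{u} = \lim_{\varepsilon \to 0}\varepsilon^{-1}\bigl[\NewtonF_{\varphi}(u+\varepsilon \overline{u}) - \NewtonF_{\varphi}(u)\bigr]$,
with the limit taken in the norm of $\mathbb{S}^{2}_{\dimX} \times \mathbb{L}^{2}_{T}$. Both components of $\NewtonF_{\varphi}(u)(t)$ are affine in $u$ except for the coefficient evaluations $b(s,x(s))$, $\sigma(s,x(s))$, $f(s,u(s))$ and $\varphi(x(T))$. The affine pieces contribute $\overline{x}(t)-\overline{x}(0)$ to the first component and $\overline{y}(t)+\int_{t}^{T}\overline{z}(s)\dBs$ to the second component exactly (no remainder), so the task reduces to analyzing the increments of the four nonlinear terms.

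For each of $b,\sigma,f,\varphi$, I would apply the Banach-space mean value expansion \eqref{notation: The Mean value theorem in the Banach space} at the base point with increment $\varepsilon \overline{u}(s)$. For instance,
\[
b(s,x(s)+\varepsilon \overline{x}(s)) - b(s,x(s)) = \varepsilon\, b'_{x}(s,x(s))\overline{x}(s) + \varepsilon\,\mathrm{R}_{b}(s,\varepsilon)\overline{x}(s),
\]
with $\mathrm{R}_{b}(s,\varepsilon)=\int_{0}^{1}\bigl[b'_{x}(s,x(s)+\theta \varepsilon \overline{x}(s))-b'_{x}(s,x(s))\bigr]\mathrm{d}\theta$, and analogously for $\sigma,\varphi$. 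For $f$ one further invokes \eqref{notation: linearity of the Jacobian matrix} so that $f'(s,u(s))\overline{u}(s)=f'_{x}\overline{x}(s)+f'_{y}\overline{y}(s)+f'_{z}\overline{z}(s)$. Dividing the resulting difference quotient by $\varepsilon$ and collecting leading terms reproduces exactly \eqref{G_derivative}; what remains is to show that each remainder contribution vanishes in the relevant norm as $\varepsilon \to 0$.

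The remainder bounds follow a uniform recipe. For the Lebesgue integrals coming from $b$ and $f$, apply Cauchy--Schwarz to convert $\sup_{t}\bigl|\int \cdots\ds\bigr|^{2}$ into $T\int_{0}^{T}|\cdots|^{2}\ds$, then take expectation to reduce to bounding $\mathbb{E}\bigl[\int_{0}^{T}|\mathrm{R}_{b}(s,\varepsilon)\overline{x}(s)|^{2}\ds\bigr]$ and its analog for $f$. For the It\^o integral coming from $\sigma$, combine Doob's $L^{2}$-maximal inequality with the It\^o isometry exactly as in Lemma~\ref{op_F}. The terminal $\varphi$ term is constant in $t$, so its contribution to $\|\cdot\|_{\mathbb{L}^{2}_{T}}$ collapses to a plain $\mathbb{L}^{2}$ estimate. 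In every case Assumption~\ref{ass0+1} yields the uniform bound $|\mathrm{R}_{g}(s,\varepsilon)|\le 2\|g'\|_{\infty}$, so the integrands are dominated by $2\|g'\|_{\infty}|\overline{u}(s)|$, which lies in $L^{2}(\Omega\times[0,T])$ since $\overline{u}\in\mathbb{S}^{2}_{\dimX}\times\mathbb{S}^{2}_{\dimY}\times\mathbb{H}^{2}$; continuity of $g'$ in its spatial arguments forces the integrand to zero pointwise as $\varepsilon\to 0$, and the dominated convergence theorem closes each remainder estimate.

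The main obstacle is organizational rather than conceptual: one must simultaneously control the $\mathbb{S}^{2}_{\dimX}$ sup-norm of the forward component and the $\mathbb{L}^{2}_{T}$ sup-norm of the backward component, and carefully track that the $\sigma$-contribution to the forward remainder, the $f$-contribution to the backward remainder, and the $\varphi$-contribution to the terminal value are each handled by the correct inequality (Doob, Jensen, $\mathbb{L}^{2}$, respectively). Once the supremum is brought inside the expectation, however, everything reduces to standard $L^{2}(\Omega\times[0,T])$ estimates and no new ingredient beyond the uniform boundedness of derivatives and dominated convergence is required.
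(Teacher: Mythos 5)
Your proposal is correct and follows essentially the same route as the paper: expand $b,\sigma,f,\varphi$ via the mean value formula \eqref{notation: The Mean value theorem in the Banach space}, identify the leading terms with the right-hand side of \eqref{G_derivative}, and kill the Lagrange remainders in the $\mathbb{S}^{2}_{\dimX}\times\mathbb{L}^{2}_{T}$ norm by dominated convergence using the uniform boundedness and continuity of the derivatives. Your write-up is in fact slightly more explicit than the paper's (which compresses the remainder estimates into a single appeal to dominated convergence), but no new idea is involved; the only step you leave implicit that the paper states separately is the preliminary check that the candidate derivative $A(u)\overline{u}$ itself lies in $\mathbb{S}^{2}_{\dimX}\times\mathbb{L}^{2}_{T}$.
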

\begin{proof}
We denote the right-hand side of \eqref{G_derivative} by $A(u) \overline{u}(t)$ for $\overline{u}=(\overline{x}, \overline{y},\overline{z})\in \mathbb{S}^{2}_{\dimX} \times \mathbb{S}^{2}_{\dimY} \times \mathbb{H}^2$.
We note that because 
\begin{align*}
	&\mathbb{E}\left[  \int_t^T 
	\left\{
		\left|  {b}'_{x}   (s,x(s)) \overline{x}(s)  \right|^2
		+\left|  {\sigma}'_{x}   (s,x(s)) \overline{x}(s)  \right|^2
		+\left|  {f}'   (s,u(s)) h(s)  \right|^2
	\right\}
	\ds \right]\\
	&\leq (1\vee T)\{\|{b}'  \|_{\infty}^2+\|{\sigma}'  \|_{\infty}^2+\|{f}'  \|_{\infty}^2\} \| h\|^2,
\end{align*}
we obtain that $A(u) \overline{u} \in \mathbb{L}_T^{2}$ by the same argument of Lemma \ref{op_F}. 
It follows from \eqref{notation: The Mean value theorem in the Banach space} that, for all $s \in [0,T]$, 
\begin{align*}
	b(s,x(s)+ \delta \overline{x}(s))-b(s, x(s))
	&=\delta {b}'_{x} \left(s, x(s) \right) \overline{x}(s)
	+\mathrm{R}_{b} \left(x(s) \right) (\delta \overline{x})(s), \\
	\sigma (s,x(s)+ \delta \overline{x}(s))-\sigma(s, x(s))
	&=\delta {\sigma}_x'\left(s, x(s) \right) \overline{x}(s)
	+\mathrm{R}_{\sigma} \left(x(s) \right) (\delta \overline{x})(s), \\
	f(s,u(s)+ \delta \overline{u}(s))-f(s, u(s))
	&=\delta {f}'\left(s, u(s) \right) \overline{u}(s)
	+\mathrm{R}_{f} \left(u(s) \right) (\delta \overline{u})(s), 
\end{align*}
and
\begin{align*}
	\varphi(x(T)+ \delta \overline{x} (T))-\varphi(x(T))
	= \delta {\varphi}'\left(x(T) \right) \overline{x}(T)
	+\mathrm{R}_{\varphi} \left(x(T) \right)(\delta \overline{x})(T). 
\end{align*}
Hence, for all $\delta>0$ and $t \in [0,T]$,
\begin{align*}
	&
	\frac{\NewtonF_{\varphi} \left( u+  \delta \overline{u} \right) (t) - \NewtonF_{\varphi}(u)(t)}{\delta}
	\\&
	=A(u)\overline{u}(t)
	+
	\frac{1}{\delta}
	\begin{pmatrix}
		\displaystyle
		-\int_{0}^{t} \mathrm{R}_{b} \left(x(s) \right) (\delta \overline{x})(s) \ds
		-\int_{0}^{t} \mathrm{R}_{\sigma} \left(x(s) \right) (\delta \overline{x})(s) \dBs\\
		\displaystyle
		-\mathrm{R}_{\varphi} \left(x(T) \right)(\delta \overline{x})(T)
		-\int_{t}^{T}\mathrm{R}_{f} \left(u(s) \right) (\delta \overline{u})(s) \ds.
	\end{pmatrix}^{\top}. 
\end{align*}
Since $b'_x(s,\cdot)$, $\sigma'_x(s,\cdot)$, $f'(s,\cdot)$ and $\varphi'$ are bounded and continuous $(s,\omega)$-a.e., by using the dominated convergence theorem, we obtain
\begin{align*}
	\lim_{\delta \to 0}
	\left\|\frac{\NewtonF_{\varphi} \left( u+  \delta \overline{u} \right) - \NewtonF_{\varphi}(u)}{\delta}-A(u)\overline{u} \right\|_{ \mathbb{S}_{\dimX}^2 \times \mathbb{L}_T^2}=0,
\end{align*}
thus completing the proof. 
\end{proof}

The following lemma is a key in order to define the Newton-Kantorovitch approximation process.

\begin{Lem}\label{lem: derivative F's inverse estimation}
	Suppose that Assumptions \ref{ass0} and \ref{ass0+1} hold 
	and let $u=(x,y,z) \in \mathbb{S}^{2}_{\dimX} \times \mathbb{S}^{2}_{\dimY} \times \mathbb{H}^2$.
	Then, there exists a unique $\widetilde{u}=(\widetilde{x}, \widetilde{y},\widetilde{z}) \in \mathbb{S}^{2}_{\dimX} \times \mathbb{S}^{2}_{\dimY} \times \mathbb{H}^2$ such that $\NewtonF_{\varphi}(u)+\NewtonF_{\varphi}'(u) (\widetilde{u}-u)=0$ with initial condtion $\widetilde{x}(0)=x(0)$.
\end{Lem}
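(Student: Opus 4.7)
The plan is to substitute the explicit forms \eqref{Def: F in BSDEs} and \eqref{G_derivative} into $\NewtonF_{\varphi}(u)+\NewtonF_{\varphi}'(u)(\widetilde{u}-u)=0$ and observe that the equation inherits the same upper-triangular structure as $\NewtonF_{\varphi}$ itself: the forward (first) component involves only $\widetilde{x}$, while the backward (second) component, once $\widetilde{x}$ is known, becomes a linear BSDE in $(\widetilde{y}, \widetilde{z})$. Accordingly, I will first solve the linear forward SDE for $\widetilde{x}$, then plug this into the backward equation and solve the resulting linear BSDE for $(\widetilde{y}, \widetilde{z})$.

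\emph{Forward step.} Writing out the first component and using $\widetilde{x}(0) = x(0)$, the $x(t)$ and $x(0)$ terms cancel and one obtains the linear SDE
\begin{equation*}
\widetilde{x}(t) = x(0) + \int_0^t \bigl\{ b(s,x(s)) + b'_x(s,x(s))(\widetilde{x}(s) - x(s)) \bigr\} \ds + \int_0^t \bigl\{ \sigma(s,x(s)) + \sigma'_x(s,x(s))(\widetilde{x}(s) - x(s)) \bigr\} \dBs.
\end{equation*}
The ``free'' drift $b(s,x(s)) - b'_x(s,x(s)) x(s)$ and diffusion $\sigma(s,x(s)) - \sigma'_x(s,x(s)) x(s)$ lie in $\mathbb{H}^2$ by Assumption \ref{ass0}, the uniform bound on the derivatives in Assumption \ref{ass0+1}, and $x \in \mathbb{S}^{2}_{\dimX}$, while the Lipschitz coefficients $b'_x(s,x(s))$ and $\sigma'_x(s,x(s))$ are uniformly bounded. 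Classical well-posedness for linear SDEs with bounded Lipschitz coefficients then yields a unique $\widetilde{x} \in \mathbb{S}^{2}_{\dimX}$.

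\emph{Backward step.} With $\widetilde{x}$ in hand, the second component rearranges to the linear BSDE
\begin{equation*}
\widetilde{y}(t) = \Xi + \int_t^T \bigl\{ \Phi(s) + f'_y(s,u(s)) \widetilde{y}(s) + f'_z(s,u(s)) \widetilde{z}(s) \bigr\} \ds - \int_t^T \widetilde{z}(s) \dBs,
\end{equation*}
with terminal value $\Xi = \varphi(x(T)) + \varphi'(x(T))(\widetilde{x}(T) - x(T))$ and inhomogeneity $\Phi(s) = f(s,u(s)) + f'_x(s,u(s))(\widetilde{x}(s)-x(s)) - f'_y(s,u(s)) y(s) - f'_z(s,u(s)) z(s)$. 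Assumptions \ref{ass0} and \ref{ass0+1}, together with $u \in \mathbb{S}^{2}_{\dimX}\times\mathbb{S}^{2}_{\dimY}\times\mathbb{H}^2$ and the previously-established $\widetilde{x}\in\mathbb{S}^{2}_{\dimX}$, yield $\Xi \in \mathbb{L}^2$ and $\Phi \in \mathbb{H}^2$, while the coefficients $f'_y(s,u(s))$ and $f'_z(s,u(s))$ multiplying $\widetilde{y}, \widetilde{z}$ are uniformly bounded. The classical Pardoux--Peng existence and uniqueness theorem for linear BSDEs then produces a unique $(\widetilde{y}, \widetilde{z}) \in \mathbb{S}^{2}_{\dimY} \times \mathbb{H}^2$.

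The main obstacle is not analytic but algebraic bookkeeping: one must carefully track that each forcing and terminal term sits in the correct space ($\mathbb{H}^2$ or $\mathbb{L}^2$), which reduces to combining the square-integrability in Assumption \ref{ass0} with the uniform bounds in Assumption \ref{ass0+1}. Once the upper-triangular decoupling is noted, the proof reduces to two invocations of textbook linear well-posedness results and no nonlinear fixed-point argument is required.
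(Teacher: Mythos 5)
Your proof is correct and follows essentially the same route as the paper: the paper likewise rewrites $\NewtonF_{\varphi}(u)+\NewtonF_{\varphi}'(u)(\widetilde{u}-u)=0$ as the linear decoupled FBSDE \eqref{eq:linear BSDEs} and invokes well-posedness for linear decoupled FBSDEs with uniformly bounded coefficients. You merely spell out what the paper leaves implicit (the forward-then-backward decoupling and the verification that the free terms lie in $\mathbb{H}^2$ and $\mathbb{L}^2$), which is a welcome addition rather than a divergence.
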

\begin{proof}
	Let $u=(x,y,z)$ be in $\mathbb{S}^{2}_{\dimX} \times \mathbb{S}^{2}_{\dimY} \times \mathbb{H}^2$.
	We show that there exists a unique $\widetilde{u}=(\widetilde{x}, \widetilde{y},\widetilde{z}) \in \mathbb{S}^{2}_{\dimX} \times \mathbb{S}^{2}_{\dimY} \times \mathbb{H}^2$ such that $\NewtonF_{\varphi}(u)+\NewtonF_{\varphi}'(u)(\widetilde{u}-u)=0$ with $\widetilde{x}(0)=x(0)$, i.e., 
	\begin{equation}\label{eq:linear BSDEs}
	\begin{split}
	\widetilde{x}(t)
	&=x(0)+\int_0^t  \left\{ b(s,x(s))+b'_{x}  (s,x(s))(\widetilde{x}(s)-x(s)) \right\}  \ds\\
	&\quad+\int_0^t \left\{ \sigma(s,x(s))+\sigma'_{x}  (s,x(s))(\widetilde{x}(s)-x(s)) \right\} \dBs,\\
	\widetilde{y}(t)
	&=\varphi  (x(T)) +\varphi'(x(T)) (\widetilde{x}(T)-x(T))\\
	&\quad+ \int_t^T \left\{ f(s,u(s))+{f}'  (s,u(s))(\widetilde{u}(s)-u(s)) \right\}  \ds
	-\int_t^T \widetilde{z}(s) \dBs,
	\end{split}
	\end{equation}
	Because the above equation is a linear decoupled FBSDE with uniformly bounded coefficients, 
	there exists a unique $\widetilde{u} \in \mathbb{S}^{2}_{\dimX} \times \mathbb{S}^{2}_{\dimY} \times \mathbb{H}^2$ as required. 
\end{proof}

From Lemma \ref{lem: derivative F's inverse estimation}, we can conclude that, 
for any initial condition $(X_{0}, Y_{0}, Z_{0}) \in \mathbb{S}^{2}_{\dimX} \times \mathbb{S}^{2}_{\dimY} \times \mathbb{H}^2$, we can define the Newton-Kantorovitch approximation process $(X_{n+1}, Y_{n+1}, Z_{n+1}) \in \mathbb{S}^{2}_{\dimX} \times \mathbb{S}^{2}_{\dimY} \times \mathbb{H}^2$ as solving the equation
\begin{equation}
\begin{split}\label{equation: backward Newton-Kantorovitch def_n}
\NewtonF_{\varphi}(X_{n},Y_{n},Z_{n}) +\NewtonF_{\varphi}' (X_{n},Y_{n},Z_{n}) (X_{n+1}-X_{n},Y_{n+1}-Y_{n},Z_{n+1}-Z_{n}) =0, 
\\ X_{n+1}(0)=X (0), 
\end{split}
\end{equation}
for $n \in \mathbb{N} \cup \{0\}$, which is equivalent to
\begin{align*}
(X_{n+1}, Y_{n+1},Z_{n+1})=
(X_{n}, Y_{n},Z_{n}) - {\NewtonF'_{\varphi} (X_{n}, Y_{n},Z_{n})}^{-1} \NewtonF_{\varphi}(X_{n}, Y_{n},Z_{n}).
\end{align*}

The following theorem shows that \eqref{equation: backward Newton-Kantorovitch def_n} has a unique solution that satisfies 
a linear decoupled FBSDE with uniformly bounded coefficients. 

\begin{Thm}\label{Theorem: linear BSES characerlization}
If Assumptions \ref{ass0} and \ref{ass0+1} hold.
Then $(X_{n+1}, Y_{n+1},Z_{n+1})$ satisfies the following linear decoupled FBSDE for $0\leq t \leq T$:
\begin{equation}\label{bsde1+Newton}
\begin{split}
	X_{n+1}(t)&=X_{n+1}(0)+\int_0^t b_n(s,X_{n+1}(s))\ds +\int_0^t \sigma_n(s,X_{n+1} (s)) \dBs,\\
	Y_{n+1}(t)&=\varphi_n(X_{n+1}(T))+ \int_t^T f_n (s, X_{n+1}(s),  Y_{n+1}(s), Z_{n+1}(s))  \ds - \int_t^T Z_{n+1}(s) \dBs, 
\end{split}
\end{equation}
where we define for $0 \leq s \leq T$ and $(x,y,z) \in \mathbb{R}^\dimX \times \mathbb{R}^\dimY \times \mathbb{R}^{\dimZ}$,
\begin{align*}
	b_n(s,x)&=b(s,X_n (s))+b'_{x}  (s,X_n (s)) (x - X_{n}(s)),\\
	\sigma_n(s,x)&=\sigma(s,X_n (s))+\sigma'_{x}  (s,X_n (s)) (x - X_{n}(s)),\\
	f_n (s, x, y, z) &=f (s, X_{n}(s), Y_{n}(s),Z_{n}(s)),\\
	&\quad+ {f}'   (X_{n}(s), Y_{n}(s), Z_{n}(s)) (x-X_{n}(s), y-Y_{n}(s) ,z-Z_{n}(s)),\\
	\varphi_n(x)&=\varphi(X_n(T))+\varphi'_{x}  (X_n (T)) (x - X_{n}(T)).
\end{align*}
In particular, if $X_0(0)=X(0)$, then $X_{n+1}(0)=X(0)$ for all $n \in \mathbb{N} \cup \{0\}$.
\end{Thm}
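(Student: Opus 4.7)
The plan is to prove this by direct substitution: the Newton-Kantorovitch equation \eqref{equation: backward Newton-Kantorovitch def_n} is a linear functional equation in the unknown $(X_{n+1},Y_{n+1},Z_{n+1})$, and I want to show it is algebraically equivalent to the linear decoupled FBSDE \eqref{bsde1+Newton}. The existence and uniqueness of a solution in $\mathbb{S}^2_{\dimX} \times \mathbb{S}^2_{\dimY} \times \mathbb{H}^2$ will then follow immediately from Lemma \ref{lem: derivative F's inverse estimation}, so the only work is to rewrite the two sides in the form claimed.

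First I would plug $u = (X_n,Y_n,Z_n)$ into the definition \eqref{Def: F in BSDEs} of $\NewtonF_{\varphi}$ and plug the increment $\overline{u} = (X_{n+1}-X_n,\, Y_{n+1}-Y_n,\, Z_{n+1}-Z_n)$ into the Gâteaux derivative formula \eqref{G_derivative} from Lemma \ref{Lem: the Gateaux derivative on BSDEs}. Setting the sum equal to zero and handling the forward component first: the sum of the $x(t)$-terms collapses to $X_{n+1}(t)$, the sum of the $x(0)$-terms collapses to $X_{n+1}(0)$, and the $ds$-integrands combine to $b(s,X_n(s)) + b'_x(s,X_n(s))(X_{n+1}(s)-X_n(s)) = b_n(s,X_{n+1}(s))$ by the definition of $b_n$ in the statement. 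The same cancellation in the stochastic integral produces $\sigma_n(s,X_{n+1}(s))$. This yields exactly the forward equation in \eqref{bsde1+Newton}.

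Next I would apply the same cancellation to the backward component. The $y(t)$-terms add to $Y_{n+1}(t)$; the $\varphi$-terms add to $\varphi(X_n(T)) + \varphi'(X_n(T))(X_{n+1}(T)-X_n(T)) = \varphi_n(X_{n+1}(T))$; the $ds$-integrands add to $f(s,X_n(s),Y_n(s),Z_n(s)) + f'(s,X_n(s),Y_n(s),Z_n(s))(X_{n+1}-X_n, Y_{n+1}-Y_n, Z_{n+1}-Z_n)(s) = f_n(s,X_{n+1}(s),Y_{n+1}(s),Z_{n+1}(s))$ by linearity \eqref{notation: linearity of the Jacobian matrix}; and the $Z_{n+1}$-stochastic integral appears with the correct sign. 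Rearranging gives the backward equation in \eqref{bsde1+Newton}.

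Finally, the initial-condition claim is a trivial induction: the Newton-Kantorovitch recursion \eqref{equation: backward Newton-Kantorovitch def_n} forces $X_{n+1}(0) = X(0)$ for every $n$ by construction, so the base case $X_0(0)=X(0)$ propagates automatically. I do not anticipate a real obstacle here; the entire proof is a careful bookkeeping of the telescoping $u + (\widetilde u - u) = \widetilde u$ in each component, with the observation that since $b_n,\sigma_n,f_n,\varphi_n$ are affine in the spatial variables with coefficients bounded by $\|b'\|_\infty, \|\sigma'\|_\infty, \|f'\|_\infty, \|\varphi'\|_\infty$, the resulting system is indeed a linear decoupled FBSDE with uniformly bounded coefficients, so Lemma \ref{lem: derivative F's inverse estimation} applies.
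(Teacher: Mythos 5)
Your proposal is correct and takes essentially the same route as the paper: existence and uniqueness via Lemma \ref{lem: derivative F's inverse estimation}, rewriting the Newton step \eqref{equation: backward Newton-Kantorovitch def_n} as the linear decoupled FBSDE by direct substitution into $\NewtonF_{\varphi}$ and \eqref{G_derivative}, and a trivial induction for the initial condition. The paper's proof simply cites the already-displayed system \eqref{eq:linear BSDEs} where you spell out the telescoping algebra explicitly.
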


\begin{proof}
The existence and uniqueness of $(X_{n+1} (t), Y_{n+1} (t), Z_{n+1}(t))_{t \in [0,T]}$ follows from Lemma \ref{lem: derivative F's inverse estimation}, where, for all $n \in \mathbb{N} \cup \{0\}$.
The equation \eqref{eq:linear BSDEs} can be re-expressed as the desired FBSDE for $0\leq t \leq T$.
Finally, by the initial condition $X_{n+1}(0)=X_{n}(0)$, if $X_0(0)=X(0)$, then $X_{n+1}(0)=X(0)$, for all $n \in \mathbb{N} \cup \{0\}$.
\end{proof}

\section{Convergence of the Newton-Kantorovitch approximation process for FBSDEs}\label{Convergence of the Newton-Kantorovitch approximation process for BSDEs}
In this section, we investigate the convergence of the Newton-Kantorovitch approximation processes defined by \eqref{equation: backward Newton-Kantorovitch def_n}. 

\subsection{Convergence of the Newton-Kantorovitch approximation process for SDEs}\label{Convergence of the Newton-Kantorovitch approximation process for SDEs}
In this subsection, we consider the Newton-Kantorovitch scheme for the forward process $X$ 
and extend earlier studies of Newton-Kantorovitch methods for SDEs, 
\begin{equation}\label{SDE1}
	X(t)=X(0) + \int_0^t b(s,X(s))\ds +\int_0^t \sigma(s,X(s))\dBs. 
\end{equation}

\begin{Thm}\label{convergence_SDE}
	Let $X$ be a solution of \eqref{SDE1}.
	If Assumptions \ref{ass0} and \ref{ass0+1} hold, then, for any $X_{0} \in \mathbb{S}^{2}_{\dimX}$ where $X_0(0)=X(0)$, we obtain that, 
	for all $ n \in \mathbb{N}\cup\{0\}$ and $\epsilon \in (0,1)$, 
	\begin{equation}\label{convergence_SDE_1}
	\begin{split}
	\| X - X_{n+1}  \|^2
	&\leq \frac{C_0^{n+1}}{(n+1)!} \| X - X_{0}  \|^2
	\leq \epsilon^{n+1}  e^{C_0T/\epsilon}\| X - X_{0}  \|^2, 
	\end{split}
	\end{equation}
	where the constant $C_0$ is given by 
	\begin{align*}
	&C_0 =8  c_{b,\sigma} T \exp(4 c_{b,\sigma} T), 
	\quad c_{b,\sigma}=\| b' \|_{\infty}+18\|\sigma' \|_{\infty }+\| \sigma' \|^2_{\infty }. 
	\end{align*}
\end{Thm}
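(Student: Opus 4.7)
The plan is to derive a linear integral recursion of the form
\begin{equation*}
\psi_{n+1}(r) \le K \int_0^r \psi_n(s)\ds, \qquad \psi_n(t) := \mathbb{E}\!\left[ \sup_{s \le t} |X(s) - X_n(s)|^2 \right],
\end{equation*}
and then iterate $n{+}1$ times to obtain the factorial improvement $\psi_{n+1}(T) \le (KT)^{n+1}/(n{+}1)!\,\psi_0(T)$, which becomes the first inequality of \eqref{convergence_SDE_1} upon setting $C_0 = KT$. The second bound follows immediately from the Taylor-series identity $C_0^{n+1}/(n{+}1)! \le \epsilon^{n+1} e^{C_0/\epsilon}$ valid for any $\epsilon \in (0,1)$.

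To produce the recursion I would set $e_n := X - X_n$ and use the identity $X_{n+1} - X_n = e_n - e_{n+1}$ to subtract \eqref{SDE1} from the linearised SDE for $X_{n+1}$ provided by Theorem \ref{Theorem: linear BSES characerlization}. This yields
\begin{align*}
e_{n+1}(t) &= \int_0^t \bigl[R^n_b(s) + b'_x(s,X_n(s))\,e_{n+1}(s)\bigr]\ds \\
&\quad + \int_0^t \bigl[R^n_\sigma(s) + \sigma'_x(s,X_n(s))\,e_{n+1}(s)\bigr]\dBs,
\end{align*}
where the Newton remainders $R^n_b(s) := b(s,X(s)) - b(s,X_n(s)) - b'_x(s,X_n(s))\,e_n(s)$ and its analogue $R^n_\sigma$ are controlled pointwise by $|R^n_b(s)| \le 2\|b'\|_\infty\,|e_n(s)|$ and $|R^n_\sigma(s)| \le 2\|\sigma'\|_\infty\,|e_n(s)|$, using only the assumed boundedness of $b'$ and $\sigma'$. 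Crucially, no $C^2$ regularity is required.

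Next I would take $\sup_{t \le r}$ and expectation, decouple remainder and linear pieces by $(a+b)^2 \le 2a^2 + 2b^2$, and apply the Cauchy--Schwarz inequality to the drift integral together with Doob's $L^2$-inequality and the It\^o isometry to the stochastic integral. This produces an inequality of the shape
\begin{equation*}
\psi_{n+1}(r) \le A \int_0^r \psi_n(s)\ds + B \int_0^r \psi_{n+1}(s)\ds,
\end{equation*}
with $A$ and $B$ depending polynomially on $T$, $\|b'\|_\infty$, and $\|\sigma'\|_\infty$. Gronwall's lemma absorbs the $\psi_{n+1}$ term at the price of the factor $e^{BT}$, yielding the desired recursion with $K = A e^{BT}$. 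Iterating and using that $\psi_0$ is non-decreasing (hence dominated on $[0,T]$ by $\psi_0(T) = \|X-X_0\|^2$) turns the nested time integrals into $T^{n+1}/(n{+}1)!$, closing the first inequality of \eqref{convergence_SDE_1}.

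The main technical obstacle is the book-keeping of constants so that $KT$ matches the stated form $C_0 = 8 c_{b,\sigma} T \exp(4 c_{b,\sigma} T)$; in particular the unusual coefficient $18$ inside $c_{b,\sigma} = \|b'\|_\infty + 18\|\sigma'\|_\infty + \|\sigma'\|^2_\infty$ is produced by combining Doob's constant $4$ with the $(a+b)^2 \le 2a^2 + 2b^2$ splittings and a Young-type inequality used to separate the $\sigma'_x\,e_{n+1}$ contribution from the Newton remainder $R^n_\sigma$ before applying Gronwall. Apart from this numerical care, the argument is entirely standard.
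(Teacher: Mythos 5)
Your proposal is correct in substance and follows the same overall architecture as the paper: subtract the linearized SDE for $X_{n+1}$ from \eqref{SDE1}, identify the Newton remainders $\mathrm{R}_b$, $\mathrm{R}_\sigma$ bounded by $2\|b'\|_\infty|e_n|$ and $2\|\sigma'\|_\infty|e_n|$ (correctly noting that no $C^2$ regularity is needed), absorb the $e_{n+1}$ terms by Gronwall, and iterate the resulting recursion $\psi_{n+1}(r)\le K\int_0^r\psi_n(s)\ds$ to get the factorial decay, with the final $\epsilon^{n+1}e^{C_0/\epsilon}$ bound from the exponential series. The one genuine divergence is in how the a priori estimate is produced: you square the integral equation and apply Cauchy--Schwarz to the drift together with Doob's $L^2$-inequality and the It\^o isometry to the martingale part, whereas the paper applies It\^o's formula to $|\overline{X}_{n+1}(t)|^2$ and controls the resulting local martingale $\int_0^t\langle\overline{X}_{n+1},\sigma'_x\overline{X}_{n+1}\dBs\rangle$ with the Burkholder--Davis--Gundy inequality (constant $c_0=3$, which is where the coefficient $18=2c_0^2$ in $c_{b,\sigma}$ comes from). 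This is not just book-keeping: the It\^o-formula route pairs $\overline{X}_{n+1}$ against $b'_x\overline{X}_{n+1}$ and so yields a constant that is \emph{linear} in $\|b'\|_\infty$, while your Cauchy--Schwarz treatment of the drift integral necessarily produces $\|b'\|_\infty^2$ (and a different dependence on $\|\sigma'\|_\infty$), so your argument proves \eqref{convergence_SDE_1} with a valid but different explicit $C_0$ and will not reproduce the stated $c_{b,\sigma}=\|b'\|_\infty+18\|\sigma'\|_\infty+\|\sigma'\|_\infty^2$ exactly. Since the theorem's content is the rate $C_0^{n+1}/(n+1)!$ for some finite $C_0$ determined by the coefficients, this costs nothing qualitatively; your version is arguably more elementary, and incidentally your exponent $e^{C_0/\epsilon}$ is the consistent one (the paper's $e^{C_0T/\epsilon}$ double-counts a factor of $T$ owing to a notational slip between the $C_0$ of the proof and that of the statement).
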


\begin{Rem}
An estimate was initially given by \cite{KYnewton91zbMATH00019571} for one-dimensional SDEs with uniformly bounded coefficients 
and an alternative estimation was proposed by \cite{Amano09zbMATH05574278}.
\end{Rem}

\begin{proof}[Proof of Theorem \ref{convergence_SDE}]
		The proof is based on \cite{Amano09zbMATH05574278}.
		By a fundamental result in \cite{Ito1942}, $X$ exists and is unique. 
		For $0\leq s \leq T$ and $n \in \mathbb{N}$, define 
		\[
		\overline{X}_{n}(s) = X(s)-X_{n}(s), 
		\]
		we obtain for all $s \in [0,T]$ and $n \in \mathbb{N}$, $\mathbb{P}$-a.s.  
		By the mean value theorem, for $s \in [0,T]$ and $n \in \mathbb{N}$, we obtain 
		\begin{align*}
		&b (s, X(s))  - b_n(s, X_{n+1}(s))  
		\\&=\left\{ b (s, X(s)) -b (s, X_{n}(s)) -b'_{x}  (s,X_{n}(s)) \overline{X}_{n}(s)\right\} 
		+b'_{x}  (s,X_{n}(s)) \overline{X}_{n+1}(s)
		\\&=
		\mathrm{R}_{b}\left(X_{n}(s) \right) \overline{X}_{n}(s)
		+b'_{x}  (s,X_{n}(s)) \overline{X}_{n+1}(s)
		\end{align*}
		and 
		\begin{align*}
		&\sigma (s, X(s))  - \sigma_n(s, X_{n+1}(s))  
		\\&=\left\{ \sigma (s, X(s)) -\sigma (s, X_{n}(s)) -\sigma'_{x}  (s,X_{n}(s))\overline{X}_{n}(s)\right\} 
		+\sigma'_{x}  (s,X_{n}(s))\overline{X}_{n+1}(s)
		\\&=
		\mathrm{R}_{\sigma}\left(X_{n}(s) \right) \overline{X}_{n}(s)+\sigma'_{x}  (s,X_{n}(s)) \overline{X}_{n+1}(s).
		\end{align*}
		Recall \eqref{notation: The Mean value theorem in the Banach space} and 
		we have, for $s \in [0,T]$, $n \in \mathbb{N}$ and $h \in \mathbb{S}^{2}_{\dimX}$, 
		\begin{align*}
		\mathrm{R}_{b}\left(X_{n}(s) \right) h(s)&=
		\left\{ \int_0^1   b'_{x} \left(s,( X_{n}(s) )+ \theta h(s) \right) \mathrm{d}\theta -  b'_{x} (s, X_{n}(s) ) \right\} h(s), 
		\\ \mathrm{R}_{\sigma}\left(X_{n}(s) \right) h(s)&=
		\left\{ \int_0^1  \sigma'_{x}  \left(s,( X_{n}(s) )+ \theta h(s) \right) \mathrm{d}\theta -  \sigma'_{x} (s, X_{n}(s) ) \right\} h(s). 
		\end{align*}
		This allows us to obtain 
		\begin{equation*}
		\begin{split}
		\overline{X}_{n+1}(t)
		&= \int_0^t  \left\{ b'_{x} (s,X_{n}(s)) \overline{X}_{n+1}(s) 
		+\mathrm{R}_{b}\left(X_{n}(s) \right)\overline{X}_{n}(s)\right\} \ds 
		\\&+ \int_0^t  \left\{ \sigma'_{x}(s,X_{n}(s)) \overline{X}_{n+1}(s) 
		+\mathrm{R}_{\sigma}\left(X_{n}(s) \right)\overline{X}_{n}(s) \right\} \dBs, 
		\end{split}
		\end{equation*}
				which, by applying It\^o's formula yields 
				\begin{equation*}
				\begin{split}
				|\overline{X}_{n+1}(t)|^2
				&= 2\int_0^t  \langle \overline{X}_{n+1}(s), b'_{x} (s,X_{n}(s)) \overline{X}_{n+1}(s)
				+\mathrm{R}_{b}\left(X_{n}(s) \right)\overline{X}_{n}(s) \rangle \ds 
				\\&+ 2\int_0^t  \langle \overline{X}_{n+1}(s), \sigma'_{x}(s,X_{n}(s)) \overline{X}_{n+1}(s) 
				+\mathrm{R}_{\sigma}\left(X_{n}(s) \right)\overline{X}_{n}(s) \dBs \rangle
				\\&+\int_0^t  \left| \sigma'_{x}(s,X_{n}(s)) \overline{X}_{n+1}(s) 
				+\mathrm{R}_{\sigma}\left(X_{n}(s) \right)\overline{X}_{n}(s) \right|^2 \ds.
				\end{split}
				\end{equation*}
		Note that we obtain 
		\begin{align*}
		2\langle \overline{X}_{n+1}(s), b'_{x} (s,X_{n}(s)) \overline{X}_{n+1}(s)
				+\mathrm{R}_{b}\left(X_{n}(s) \right)\overline{X}_{n}(s) \rangle 
				\\ \leq 2 \|b' \| |\overline{X}_{n+1}(t)|^2 +4 \|b' \||\overline{X}_{n}(t)|^2 
		\end{align*}
		and 
		\begin{align*}
		 \left| \sigma'_{x}(s,X_{n}(s)) \overline{X}_{n+1}(s) 
				+\mathrm{R}_{\sigma}\left(X_{n}(s) \right)\overline{X}_{n}(s) \right|^2
				\\ \leq 2 \|\sigma' \|^{2} |\overline{X}_{n+1}(t)|^2 +4 \|\sigma' \|^{2}|\overline{X}_{n}(t)|^2. 
		\end{align*}
		The Burkholder-Davis-Gundy's inequality implies that there exists a $c_0$ such that 
		\begin{align*}
			&\mathbb{E}\left[ \left| 2\int_0^t  \langle \overline{X}_{n+1}(t), \sigma'_{x}(s,X_{n}(s)) \overline{X}_{n+1}(s) \dBs \rangle\right| \right]
			\\&\leq  \mathbb{E}\left[
			2\left( \frac{1}{4}\sup_{0\leq s \leq t}|\overline{X}_{n+1}(s)|^2 \right)^{1/2}
			 \left(  4 c_0^2 \|\sigma' \| \int_0^t |X_{n+1}(s) |^2 \ds
			 \right)^{1/2}\right]
			\\ &\leq 
			\frac{1}{4}\mathbb{E}\left[\sup_{0\leq s \leq t}|\overline{X}_{n+1}(s)|^2\right]
			 +
			 4 c_0^2 \|\sigma' \| \mathbb{E}\left[ \int_0^t |X_{n+1}(s) |^2 \ds \right],
		\end{align*}
		where we obtain the last inequality by applying the inequality $ab \leq (a^2/2)+(b^2/2)$ for all $a,b \in \mathbb{R}$. Similarly, we have
		\begin{align*}
			&\mathbb{E}\left[ \left| 2\int_0^t  \langle \overline{X}_{n+1}(t), \mathrm{R}_{\sigma}\left(X_{n}(s) \right)\overline{X}_{n}(s) \dBs \rangle\right| \right]
			\\&\leq  \mathbb{E}\left[
			2\left( \frac{1}{4}\sup_{0\leq s \leq t}|\overline{X}_{n+1}(s)|^2 \right)^{1/2}
			 \left(  8 c_0^2 \|\sigma' \| \int_0^t |X_{n}(s) |^2 \ds
			 \right)^{1/2}\right]
			\\ &\leq 
			\frac{1}{4}\mathbb{E}\left[\sup_{0\leq s \leq t}|\overline{X}_{n+1}(s)|^2\right]
			 +
			 8 c_0^2\|\sigma' \| \mathbb{E}\left[ \int_0^t |X_{n}(s) |^2 \ds \right],
		\end{align*}
 		where we note that an explicit upper bounded of $3$ can be obtained for the constant $c_0$; see Theorem 3.28 in \cite{Karatzasbook}. 
		By setting 
		$
			c_{b,\sigma}=\|b' \|_{ \infty }+2 c_0^2\|\sigma' \|_{\infty }+\|\sigma' \|^2_{\infty },
		$ 
		 we obtain, for any $t' \in [0,T]$,
		\begin{equation*}
		\begin{split}
		\mathbb{E}\left[\sup_{0\leq t \leq t'}|\overline{X}_{n+1}(t)|^2\right]
		&\leq
		4c_{b,\sigma} \int_0^{t'}
			\mathbb{E}\left[\sup_{0\leq t \leq s}|\overline{X}_{n+1}(t)|^2\right]
		\ds
		\\&+8c_{b,\sigma}\int_0^{t'}
			\mathbb{E}\left[\sup_{0\leq t \leq s}|\overline{X}_{n}(t)|^2\right]
		\ds. 
		\end{split}
		\end{equation*}
		Gronwall's inequality further implies that
		\begin{equation}\label{convergence_SDE_2}
		\begin{split}
		\mathbb{E}\left[\sup_{0\leq t \leq t'}|\overline{X}_{n+1}(t)|^2\right]
		&\leq
		C_0\int_0^{t'} \mathbb{E}\left[\sup_{0\leq t \leq s}|\overline{X}_{n}(t)|^2\right]
		\ds,
		\end{split}
		\end{equation}
		where 
		$C_{0}
		=8c_{b,\sigma} \exp(4c_{b,\sigma}T)$.
		Iterating \eqref{convergence_SDE_2} yields 
		\begin{equation*}
		\begin{split}
		\mathbb{E}\left[\sup_{0\leq t \leq T}|\overline{X}_{n+1}(t)|^2\right]
		&\leq
		C_{0}
		^2 \int_0^{T}\ds_1 \int_0^{s_1}\ds_2
		\mathbb{E}\left[\sup_{0\leq t \leq s_2}|\overline{X}_{n-1}(t)|^2\right]
		\\&\leq	
		C_{0}
		^{n+1} \mathbb{E}\left[\sup_{0\leq t \leq T}|\overline{X}_{0}(t)|^2\right]
		\int_0^{T}\ds_1 \int_0^{s_1}\ds_2 \cdots \int_0^{s_{n}}\ds_{n+1}
		\\&=
		\frac{(TC_{0}
		)^{n+1}}{(n+1)!} \mathbb{E}\left[\sup_{0\leq t \leq T}|\overline{X}_{0}(t)|^2\right]. 
		\end{split}
		\end{equation*}
		In  addition,  we obtain that 
		\[
		\epsilon^{ -(n+1)}\frac{(TC_{0})^{n+1}}{(n+1)!}  \leq \sum_{k=0}^{\infty}\frac{(TC_{0}/\epsilon)^{k}}{k!} =e^{C_0T/\epsilon},
		\]
		thus completing the proof. 
\end{proof}
\begin{Rem}
We can also sue the same argument to show that the approximation process is a Cauchy sequence in $\mathbb{S}^{2}_{\dimX} $. 
\end{Rem}

\subsection{Toward decoupled forward-backward stochastic differential equations}\label{Toward decoupled forward-backward stochastic differential equations}
The inequality in Theorem \ref{convergence_BSDE} below indicates that approximating the terminal condition is the key to estimating the error between the solution and the Newton-Kantorovitch approximation process. 
In this section, we consider the Newton-Kantorovitch approximation with respect to the terminal condition. 
First, we show that it 
converges with respect to the weighted norm $\|\cdot \|_{\alpha}$.

\begin{Thm}\label{convergence_BSDE} 
	Let $(X,Y,Z)$ be a solution of the FBSDE \eqref{fbsde1 decoupled}.
	If Assumptions \ref{ass0} and \ref{ass0+1} hold and $(X_{0}, Y_{0}, Z_{0}) \in {\mathbb{S}^{2}_{\dimX} \times } \mathbb{S}^{2}_{\dimY} \times \mathbb{H}^2$ such that $X_{0}(0)=X(0)$, then, for all $ n \in \mathbb{N}\cup\{0\}$, $\epsilon \in (0,1)$ and $T>0$, we obtain 
	\begin{equation}\label{Theorem: the linear convergence inequality}
	\begin{split}
	\| (
	Y - Y_{n+1}, Z - Z_{n+1} ) \|_{\alpha}^2
	\leq  \epsilon^{n+1} 
	\left\{ C_1 \| X - X_{0} \|_{{\mathbb{S}^2_{\dimX}}}^2 +\| (Y - Y_{0}, Z - Z_{0} ) \|_{\alpha}^2 \right\}
	\end{split}
	\end{equation}
where 
$C_1 \equiv C_1(\epsilon)=\epsilon^{-1} \{1+\left( 1+4(2+TC_0) \| \varphi' \|_{\infty}^2 \right) \left(1+4c_{0}^2 \right)\} e^{\alpha T+C_0/\epsilon}$, 
$c_0=3$ and $C_0$ is given by Theorem \ref{convergence_SDE} and $\alpha =2 \| {f}' \|_{\infty}+4 \| {f}' \|_{\infty}^2+12 \| {f}' \|_{\infty} \left(1+ 4 c_0^2 \right) (1 \vee T) \epsilon^{-1}$. 
\end{Thm}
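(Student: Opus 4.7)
The plan is to mirror the strategy of Theorem \ref{convergence_SDE} but for the backward pair, working with the weighted norm $\|\cdot\|_{\alpha}$ so that the missing smallness-in-time compensating factor of the backward equation is replaced by the exponential weight $e^{\alpha s}$. Set $\overline{X}_n = X - X_n$, $\overline{Y}_n = Y - Y_n$, $\overline{Z}_n = Z - Z_n$, and $\overline{u}_n = (\overline{X}_n,\overline{Y}_n,\overline{Z}_n)$. Subtracting \eqref{bsde1+Newton} from \eqref{fbsde1 decoupled} and using the Lagrange remainder identity \eqref{notation: The Mean value theorem in the Banach space} for $\varphi$ and $f$, the pair $(\overline{Y}_{n+1},\overline{Z}_{n+1})$ solves a linear BSDE with terminal value
\begin{equation*}
\varphi(X(T))-\varphi_n(X_{n+1}(T)) = \mathrm{R}_{\varphi}(X_n(T))\overline{X}_n(T)+\varphi'(X_n(T))\overline{X}_{n+1}(T)
\end{equation*}
and driver $\mathrm{R}_{f}(u_n(s))\overline{u}_n(s)+f'(s,u_n(s))\overline{u}_{n+1}(s)$, with $u_n=(X_n,Y_n,Z_n)$. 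Both remainders are uniformly bounded (respectively by $2\|\varphi'\|_\infty$ and $2\|f'\|_\infty$) thanks to Assumption \ref{ass0+1}.

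Next, I would apply It\^o's formula to $e^{\alpha s}|\overline{Y}_{n+1}(s)|^2$ on $[t,T]$ and take expectations. The quadratic-variation term contributes $\int_t^T e^{\alpha s}|\overline{Z}_{n+1}(s)|^2\ds$ on the left, while the derivative of the weight contributes $\int_t^T \alpha e^{\alpha s}|\overline{Y}_{n+1}(s)|^2\ds$. On the right, the cross term
\begin{equation*}
2\bigl\langle \overline{Y}_{n+1}(s),\ f'(s,u_n(s))\overline{u}_{n+1}(s)+\mathrm{R}_f(u_n(s))\overline{u}_n(s)\bigr\rangle
\end{equation*}
is broken up by Young's inequality $2ab\le \eta a^2+\eta^{-1}b^2$ with parameters chosen so that the coefficient of $e^{\alpha s}|\overline{Y}_{n+1}|^2$ is strictly smaller than $\alpha$ and the coefficient of $e^{\alpha s}|\overline{Z}_{n+1}|^2$ is strictly smaller than $1$. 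The precise choice $\alpha=2\|f'\|_\infty+4\|f'\|_\infty^2+12\|f'\|_\infty(1+4c_0^2)(1\vee T)\epsilon^{-1}$ of the statement is tailored to make this absorption work and simultaneously yield a coefficient of size $\epsilon$ (rather than $1$) in front of $\|(\overline{Y}_n,\overline{Z}_n)\|_\alpha^2$, the $\epsilon^{-1}$ factor in $\alpha$ being what enforces strict contraction.

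Combining with the Burkholder--Davis--Gundy inequality (constant $c_0\le 3$ as in Theorem 3.28 of \cite{Karatzasbook}) to upgrade the pointwise estimate on $\mathbb{E}[e^{\alpha t}|\overline{Y}_{n+1}(t)|^2]$ to the $\sup$-norm appearing in $\|\cdot\|_\alpha$, and using the terminal bound $|\varphi(X(T))-\varphi_n(X_{n+1}(T))|^2\le 2\|\varphi'\|_\infty^2|\overline{X}_{n+1}(T)|^2+8\|\varphi'\|_\infty^2|\overline{X}_n(T)|^2$, one obtains the one-step recursion
\begin{equation*}
\|(\overline{Y}_{n+1},\overline{Z}_{n+1})\|_\alpha^2 \le \epsilon\,\|(\overline{Y}_n,\overline{Z}_n)\|_\alpha^2 +\Gamma \bigl(\|\overline{X}_n\|_{\mathbb{S}^{2}_{\dimX}}^2+\|\overline{X}_{n+1}\|_{\mathbb{S}^{2}_{\dimX}}^2\bigr)
\end{equation*}
for a constant $\Gamma$ explicit in $\|\varphi'\|_\infty$, $e^{\alpha T}$, $T$, $c_0$. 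Iterating from step $n$ down to $0$, then inserting the factorial bound $\|\overline{X}_k\|_{\mathbb{S}^{2}_{\dimX}}^2 \le (TC_0)^k/k!\cdot\|\overline{X}_0\|_{\mathbb{S}^{2}_{\dimX}}^2$ from Theorem \ref{convergence_SDE} and summing via $\sum_{k=0}^{n}\epsilon^{n-k}(TC_0)^k/k!\le \epsilon^n e^{TC_0/\epsilon}$ produces the announced $\epsilon^{n+1}$ decay with the prefactor $C_1$. The chief technical obstacle is the second step: balancing $\alpha$ so that every $\overline{Y}_{n+1}$ and $\overline{Z}_{n+1}$ cross term is absorbable into the left-hand side \emph{while} leaving a contraction constant strictly less than $1$, so that the $e^{\alpha T}$ price paid in the norm equivalence is more than offset by the geometric factor $\epsilon^{n+1}$.
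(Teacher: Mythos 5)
Your proposal follows essentially the same route as the paper's proof: the same Lagrange-remainder decomposition of the driver and terminal condition, the same It\^o computation for $e^{\alpha s}|\overline{Y}_{n+1}(s)|^2$ with Young's inequality tuned so that the $\overline{Y}_{n+1}$ and $\overline{Z}_{n+1}$ terms are absorbed and an $\epsilon$-contraction remains, the same BDG upgrade to the supremum norm, and the same iteration of $a_{n+1}\le \epsilon a_n+b_n$ against the factorial decay of $\|\overline{X}_n\|^2$ from Theorem \ref{convergence_SDE}. The only cosmetic difference is that you keep both $\|\overline{X}_n\|$ and $\|\overline{X}_{n+1}\|$ in the one-step recursion, whereas the paper converts the latter into the former via \eqref{convergence_SDE_2}; both are then handled identically by the series bound.
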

\begin{proof}
\begin{subequations}
By the fundamental result in \cite{pardouxpen1990adapted}, 
the solution $(X,Y,Z)$ exists and is unique. 
For $s \in [0,T]$ and $n \in \mathbb{N}$, we define
\begin{align*}
	h_n (s)
	=(\overline{X}_{n}(s),\overline{Y}_{n}(s),\overline{Z}_n(s))
	=(X(s)-X_{n}(s), Y(s)-Y_{n}(s), Z (s)-Z_{n}(s)), 
\end{align*}
and apply the mean value theorem, yielding 
\begin{align*}
&f (s, X(s), Y(s), Z(s))  - f_n (s, X_{n+1}(s), Y_{n+1}(s), Z_{n+1}(s))  
	\\&={f}'  (s, X_{n}(s), Y_{n}(s),  Z_{n}(s) )h_{n+1} (s)
	\\&+\left\{ f (s, X(s), Y(s), Z(s)) - f (s, X_{n}(s), Y_{n}(s), Z_{n}(s)) -{f}'  (s, X_{n}(s), Y_{n}(s),  Z_{n}(s) )h_n (s)\right\} 
	\\&={f}'  (s, X_{n}(s), Y_{n}(s),  Z_{n}(s) ) h_{n+1}(s)+\mathrm{R}_{f}\left(X_{n}(s), Y_{n}(s),  Z_{n}(s) \right) h_{n}(s). 
\end{align*}
Recall \eqref{notation: The Mean value theorem in the Banach space} and 
we have, for $s \in [0,T]$, $n \in \mathbb{N}$ and $h \in \mathbb{S}^{2}_{\dimX} \times \mathbb{S}^{2}_{\dimY} \times \mathbb{H}^2$, 
\begin{align*}
	&\mathrm{R}_{f}\left(X_{n}(s), Y_{n}(s),  Z_{n}(s) \right) h(s)
	\\ &=
	\left\{ \int_0^1  {f}'\left(s, (X_{n}(s), Y_{n}(s),  Z_{n}(s) )+ \theta h(s) \right) \mathrm{d}\theta - {f}'  (s, X_{n}(s), Y_{n}(s),  Z_{n}(s) ) \right\} h(s). 
\end{align*}
This allows us to show that $(\overline{Y}_{n+1},\overline{Z}_{n+1})$ satisfies the following linear BSDE
\begin{equation*}
\begin{split}
	&\overline{Y}_{n+1}(t) - \overline{Y}_{n+1}(T)+\int_t^T  \overline{Z}_{n+1}(s)  \dBs 
	\\&=\int_t^T  
		\left\{ {f}' (s, X_{n}(s), Y_{n}(s), Z_{n}(s) ) h_{n+1} (s)  +\mathrm{R}_{f}\left(X_{n}(s), Y_{n}(s), Z_{n}(s) \right)h_n(s)  \right\} \ds.
\end{split}
\end{equation*}
Applying It\^o's formula to $e^{\alpha  t} \left| \overline{Y}_{n+1}(t) \right|^2$ for all $\alpha \in \mathbb{R}$, we obtain 
\begin{equation}\label{L two times exponential alpha from t to T}
\begin{split}
	&e^{\alpha  t} \left| \overline{Y}_{n+1}(t) \right|^2 -e^{\alpha  T} \left| \overline{Y}_{n+1}(T) \right|^2 
	+  \int_t^T  e^{\alpha  s} \left| \overline{Z}_{n+1}(s) \right|^2   \ds 
	\\&= \int_t^T e^{\alpha  s} (-\alpha)  \left| \overline{Y}_{n+1}(s) \right|^2   \ds
	-2 \int_t^T   e^{\alpha s} \langle {\overline{Y}_{n+1}(s)}, \overline{Z}_{n+1}(s)  \dBs \rangle
	\\&+2\int_t^T e^{\alpha  s} \langle  \overline{Y}_{n+1}(s), {f}' (s, X_{n}(s),  Y_{n}(s), Z_{n}(s) ) h_{n+1} (s)
	+\mathrm{R}_{f} \left(X_{n}(s), Y_{n}(s), Z_{n}(s) \right) h_n(s) \rangle  \ds.
\end{split}
\end{equation}
From the Cauchy-Schwarz inequality and the inequality $2a b \leq \delta^{-1} |a|^2 +\delta |b|^2$ for all $\delta>0$, we have
\begin{align}
	 &
	 2 \left| \langle \overline{Y}_{n+1}(s), f'_{x} (s, X_{n}(s), Y_{n}(s), Z_{n}(s) )\overline{X}_{n+1}(s) \rangle\right| 
	 \leq 
	 2 \| {f}' \|_{\infty}^2  \left| \overline{Y}_{n+1}(s) \right|^2+\frac{1}{2} \left| \overline{X}_{n+1}(s)  \right|^2,
	 \notag
	\\&
	2 \left|\langle \overline{Y}_{n+1}(s) ,  f'_{y} (s, X_{n}(s), Y_{n}(s), Z_{n}(s) )\overline{Y}_{n+1}(s) \rangle\right| 
	\leq 2 \| {f}' \|_{\infty} \left| \overline{Y}_{n+1}(s) \right|^2, \notag
	 \\&
	 2 \left| \langle \overline{Y}_{n+1}(s), f'_{z} (s, X_{n}(s), Y_{n}(s), Z_{n}(s) )\overline{Z}_{n+1}(s) \rangle\right| 
	 \leq 
	 2 \| {f}' \|_{\infty}^2  \left| \overline{Y}_{n+1}(s) \right|^2+\frac{1}{2} \left| \overline{Z}_{n+1}(s)  \right|^2,
	 \notag
	 \\ &
	  2 \left| \langle \overline{Y}_{n+1}(s),  \mathrm{R}_{f}\left( X_{n}(s),
	  Y_{n}(s),  Z_{n}(s) \right) h_{n}(s) \rangle \right|
	  \leq  \delta^{-1}  \left| \overline{Y}_{n+1}(s) \right|^2+ \delta\left| \mathrm{R}_{f}\left(X_{n}(s), Y_{n}(s),  Z_{n}(s) \right) h_{n}(s)\right|^2 \notag,
\end{align}
and
\begin{align}
	\left| \mathrm{R}_{f}\left(X_{n}(s), Y_{n}(s),  Z_{n}(s) \right) h_{n}(s) \right|
	 \leq  2 \| {f}' \|_{\infty} \left| h_n(s) \right|.
	  \label{Rf estimation}
\end{align}
The right-hand side of \eqref{L two times exponential alpha from t to T} is therefore less than or equal to
\begin{align*}
	&\int_t^T e^{\alpha  s} \cdot 
	\left\{
		(-\alpha)
		+ 2 \| {f}' \|_{\infty}
		+4 \| {f}' \|_{\infty}^2
		+\delta^{-1}
	\right\}
	\left|
		\overline{Y}_{n+1}(s)
	\right|^2 
	\ds
	\\&+\int_t^T
		e^{\alpha  s}
		\left\{
			\frac{1}{2} \left|\overline{Z}_{n+1}(s)\right|^2 
			+
			\frac{1}{2} \left|\overline{X}_{n+1}(s)\right|^2 
			+ \delta
				\left|
					\mathrm{R}_{f}\left(X_{n}(s), Y_{n}(s), Z_{n}(s) \right) h_{n}(s)
				\right|^2
		\right\} \ds\\
	&-2 \int_t^T   e^{\alpha s} \langle {\overline{Y}_{n+1}(s)}, \overline{Z}_{n+1}(s)  \dBs \rangle.
\end{align*}
Setting $\alpha \equiv \alpha(\delta)=  2 \| {f}' \|_{\infty}+4 \| {f}' \|_{\infty}^2+\delta^{-1} $, we obtain 
\begin{equation}
\begin{split}
	&e^{\alpha  t} \left| \overline{Y}_{n+1}(t) \right|^2 -e^{\alpha  T} \left| \overline{Y}_{n+1}(T) \right|^2  
	+\frac{1}{2}  \int_t^T  e^{\alpha  s}
	 \left\{ \left| \overline{Z}_{n+1}(s) \right|^2   -  \left| \overline{X}_{n+1}(s) \right|^2 \right\}
	  \ds  
	\\&
	\leq  \delta \int_t^T e^{\alpha  s}  \left| \mathrm{R}_{f}\left(X_{n}(s), Y_{n}(s),  Z_{n}(s) \right) h_{n}(s) \right|^2 \ds
	- 2\int_t^T   e^{\alpha  s}   \langle {\overline{Y}_{n+1}(s)}, \overline{Z}_{n+1}(s)  \dBs \rangle.  \label{an inequality before applying the BDG inequality}
\end{split}
\end{equation}
As $(Y,Z),  (Y_{n}, Z_{n}) \in \mathbb{S}^{2}_{\dimY} \times \mathbb{H}^2$, 
the local martingale 
\[
	 \left\{ \int_0^t   e^{\alpha  s}   \langle {\overline{Y}_{n+1}(s)}, \overline{Z}_{n+1}(s)  \dBs \rangle \right\}_{t \in [0,T]} 
\] 
vanishes at $0$. 
Thus, in particular, by setting $t=0$,  we obtain 
\begin{equation}
\begin{split}
	&\mathbb{E} \left[ \int_0^T  e^{\alpha  s} 
		\left\{ \left| \overline{Z}_{n+1}(s) \right|^2   -  \left| \overline{X}_{n+1}(s) \right|^2 \right\} 
		\ds \right]
	 \\ &\leq 2 \delta \mathbb{E} \left[ \int_0^T e^{\alpha  s}  \left| \mathrm{R}_{f}\left(X_{n}(s), Y_{n}(s),  Z_{n}(s) \right) h_{n}(s) \right|^2 \ds \right]
	  +2 \mathbb{E} \left[ e^{\alpha  T} \left| \overline{Y}_{n+1}(T) \right|^2  \right] 
	  \label{Z bounded by Repsilon}.
\end{split}
\end{equation}
Note that 
\begin{align*}
	\int_t^T   e^{\alpha  s}   \langle {\overline{Y}_{n+1}(s)}, \overline{Z}_{n+1}(s)  \dBs \rangle
	= \int_0^T   1_{[t,T]}(s) e^{\alpha  s}   \langle {\overline{Y}_{n+1}(s)}, \overline{Z}_{n+1}(s)  \dBs \rangle. 
\end{align*}
Applying the Burkholder-Davis-Gundy inequality indicates that there is a universal $c_{0}$ such that 
\begin{align*}
&2 \mathbb{E} \left[\sup_{0 \leq t \leq T}\left|   \int_t^T   e^{\alpha  s}   \langle {\overline{Y}_{n+1}(s)}, \overline{Z}_{n+1}(s)  \dBs \rangle \right| \right]
	\\&\leq 2 c_{0} \mathbb{E} \left[\left( \int_0^T 1_{[t,T]}(s)  e^{2 \alpha  s}   \left| \overline{Y}_{n+1}(s) \right|^2 \left|  \overline{Z}_{n+1}(s) \right|^2 \ds \right)^{\frac{1}{2}} \right]
\\	&\leq \mathbb{E} \left[ \sup_{0 \leq t \leq T} e^{ \alpha  t/2}   \left| \overline{Y}_{n+1}(t) \right| \left( 4c_{0}^2 \int_0^T   e^{ \alpha  s}    \left|  \overline{Z}_{n+1}(s) \right|^2 \ds \right)^{\frac{1}{2}}\right],
\end{align*}
 	where we note that an explicit upper bounded of $3$ can be obtained on the constant; 
	refer to Theorem 3.28 in \cite{Karatzasbook}. 
	Hence, by considering the supremum of \eqref{an inequality before applying the BDG inequality} and using the inequality $ab \leq (a^2/2)+(b^2/2)$ for all $a,b \in \mathbb{R}$, we obtain
\begin{align*}
&	\mathbb{E} \left[ \sup_{0 \leq t \leq T} e^{\alpha  t} \left| \overline{Y}_{n+1}(t)\right|^2 
	-e^{\alpha  T} \left| \overline{Y}_{n+1}(T) \right|^2
	+\frac{1}{2}  \int_0^T  e^{\alpha  s} 
	\left\{ \left| \overline{Z}_{n+1}(s) \right|^2   -  \left| \overline{X}_{n+1}(s) \right|^2 \right\}
	  \ds \right]
	\\&\leq 
	\delta \int_0^T e^{\alpha  s}  \mathbb{E} \left[ \left| \mathrm{R}_{f}\left(X_{n}(s), Y_{n}(s),  Z_{n}(s) \right) h_{n}(s) \right|^2 \right] \ds 
	\\&+
	 \frac{1}{2}\mathbb{E} \left[ \sup_{0 \leq t \leq T} e^{ \alpha  t}   \left| \overline{Y}_{n+1}(t) \right|^2 \right]
	+2 c_{0}^2
	 \mathbb{E} \left[\int_0^T   e^{ \alpha  s} \left| \overline{Z}_{n+1}(s) \right|^2 \ds \right],
\end{align*}
which implies that
\begin{align*}
&	\mathbb{E} \left[ \sup_{0 \leq t \leq T} e^{\alpha  t} \left| \overline{Y}_{n+1}(t)\right|^2 
	-2e^{\alpha  T} \left| \overline{Y}_{n+1}(T) \right|^2
	+\int_0^T  e^{\alpha  s} \left\{ \left| \overline{Z}_{n+1}(s) \right|^2   -  \left| \overline{X}_{n+1}(s) \right|^2 \right\}
	   \ds \right]
	\\&\leq 
	2\delta \int_0^T e^{\alpha  s}  \mathbb{E} \left[ \left| \mathrm{R}_{f}\left(X_{n}(s), Y_{n}(s),  Z_{n}(s) \right) h_{n}(s) \right|^2 \right] \ds 
	+4 c_{0}^2
	 \mathbb{E} \left[\int_0^T   e^{ \alpha  s} \left| \overline{Z}_{n+1}(s) \right|^2 \ds \right]. 
\end{align*}
By then taking the inequality \eqref{Z bounded by Repsilon} into consideration, we observe  
\begin{equation*}
\begin{split}
&\mathbb{E} \left[ \sup_{0 \leq t \leq T} e^{\alpha  t} \left| \overline{Y}_{n+1}(t) \right|^2 +  \int_0^T  e^{\alpha  s} \left\{ \left| \overline{Z}_{n+1}(s) \right|^2   -  \left| \overline{X}_{n+1}(s) \right|^2 \right\}
  \ds  \right]
	\\&
	\leq 
	2\delta\left( 1+4 c_0^2
	 \right) \int_0^T e^{\alpha  s}   \mathbb{E} \left[ \left| \mathrm{R}_{f}\left(X_{n}(s), Y_{n}(s),  Z_{n}(s) \right) h_{n}(s)\right|^2\right] \ds\\
	&\quad+2\left(1+4 c_0^2
	\right)e^{\alpha  T} \mathbb{E} \left[ \left| \overline{Y}_{n+1}(T) \right|^2  \right] 
	+
	4 c_0^2 \mathbb{E} \left[\int_0^T   e^{ \alpha  s} \left| \overline{X}_{n+1}(s) \right|^2 \ds \right] 
	.
\end{split}
\end{equation*}
If we also consider the inequality \eqref{Rf estimation}, we find 
\begin{align*}	
	&\int_0^T e^{\alpha  s}   \mathbb{E} \left[ \left| \mathrm{R}_{f}\left(X_{n}(s), Y_{n}(s),  Z_{n}(s) \right) h_{n}(s) \right|^2\right] \ds
	\\&\leq  6\| {f}' \|_{\infty} (1 \vee T) 
	\left\{ \mathbb{E} \left[ \sup_{0 \leq t \leq T} e^{\alpha  t} \left| \overline{X}_{n}(t) \right|^2+\sup_{0 \leq t \leq T} e^{\alpha  t} \left| \overline{Y}_{n}(t) \right|^2 +  \int_0^T  e^{\alpha  s} \left| \overline{Z}_{n}(s) \right|^2  \ds  \right]\right\}.
\end{align*}
Selecting $\delta$ such that $  12 \| {f}' \|_{\infty} \left(1+ 4 c_0^2
	 \right) (1 \vee T)  \delta=\epsilon$,  
we obtain 
\[
	\alpha
	\equiv \alpha(\delta)=2 \| {f}' \|_{\infty}+4 \| {f}' \|_{\infty}^2+12 \| {f}' \|_{\infty} \left(1+4 c_0^2\right) (1 \vee T) \epsilon^{-1}.
\]
This leads to 
\begin{align*}
\begin{split}
	&\| (\overline{Y}_{n+1}, \overline{Z}_{n+1} ) \|_{\alpha}^2 
	\\& \leq  \epsilon \| ( \overline{X}_{n}, \overline{Y}_{n}, \overline{Z}_{n})\|_{\alpha}^2
	+2\left(1+4c_{0}^2 \right)e^{\alpha  T} \|  \overline{Y}_{n+1}(T)   \|_{\mathbb{L}^2}^2
	+ \left(1+4c_{0}^2 \right)\|  \overline{X}_{n+1}   \|_{\alpha}^2.
\end{split}
\end{align*}
Hence, using \eqref{convergence_SDE_2},
\begin{align*}
	\mathbb{E} \left[ \left| \overline{Y}_{n+1}(T) \right|^2  \right]
	&\leq \| \varphi' \|_{\infty}^2 \left\{ 4\mathbb{E} \left[ \left| \overline{X}_{n}(T) \right|^2  \right]+2\mathbb{E} \left[ \left| \overline{X}_{n+1}(T) \right|^2  \right]\right\}\\
	&\leq 2(2+TC_0)\| \varphi' \|_{\infty}^2 \mathbb{E} \left[ \left| \overline{X}_{n}(T) \right|^2  \right].
\end{align*}
Thus, we obtain 
\[
	2\left(1+4c_{0}^2 \right)e^{\alpha  T} \|  \overline{Y}_{n+1}(T)   \|_{\mathbb{L}^2}^2
	\leq 4(2+TC_0) \| \varphi' \|_{\infty}^2 \left(1+4c_{0}^2 \right)e^{\alpha  T} \|  \overline{X}_{n}   \|_{\mathbb{S}^2_{\dimX}}^2.
\]
Applying inequality \eqref{convergence_SDE_1} then yields  for $c_1 = e^{\alpha  T}+\left( 1+4(2+TC_0) \| \varphi' \|_{\infty}^2 \right) \left(1+4c_{0}^2 \right)e^{\alpha  T}$,
\begin{align*}
\begin{split}
	&\| (\overline{Y}_{n+1}, \overline{Z}_{n+1} ) \|_{\alpha}^2 
	 \leq  \epsilon \| (\overline{Y}_{n}, \overline{Z}_{n})\|_{\alpha}^2 
	 +c_1 \| X - X_{0}  \|^2  \frac{C_0^{n}}{n!}.
\end{split}
\end{align*}
For any positive sequence $\{ a_n \}$, $\{ b_n\}$ and $\epsilon >0$ 
such that $a_{n+1} \leq b_{n} + \epsilon a_{n}$ for all $n \in \mathbb{N} \cup \{0\}$, we obtain
\begin{align*}
	a_{n+1} 
	&\leq b_{n}+ \epsilon ( \epsilon a_{n-1} + b_{n-1} ) 
	= \epsilon^{2} \left( \epsilon^{-2}  b_{n} + \epsilon^{-1} b_{n-1}  +  a_{n-1} \right)
	\\ & \leq \cdots \leq  \epsilon^{n+1} \left\{ \sum_{k=0}^{n}  \epsilon^{k-(n+1)}  b_{n-k} +  a_{0} \right\} . 
\end{align*}
Replacing  $b_n =c_1 \| X - X_{0}  \|^2 \frac{C_0^{n}}{n!} $ for all $n \in \mathbb{N}$, 
we obtain 
\begin{align*}
	 \sum_{k=0}^{n}  \epsilon^{k-(n+1)} b_{n-k}
	\leq	 \epsilon^{-1} c_1 \| X - X_{0}  \|^2  \sum_{k=0}^{n} \frac{(C_0/\epsilon)^{k}}{k!}
	\leq  \epsilon^{-1} c_1 \| X - X_{0}  \|^2  e^{C_0/\epsilon}, \quad n \in \mathbb{N}\cup \{ 0 \} . 
\end{align*}
Thus, we obtain 
\begin{align*}
\begin{split}
	&\| (\overline{Y}_{n+1}, \overline{Z}_{n+1} ) \|_{\alpha}^2 
	 \leq  \epsilon^{n+1} \left( C_1  \| X - X_{0}  \|^2 +\| (\overline{Y}_{0}, \overline{Z}_{0} ) \|_{\alpha}^2  \right), 
\end{split}
\end{align*}
where we define
$
C_1
=\epsilon^{-1} c_1 e^{C_0/\epsilon}
=\epsilon^{-1} \{1+\left( 1+4(2+TC_0) \| \varphi' \|_{\infty}^2 \right) \left(1+4c_{0}^2 \right)\} e^{\alpha T+C_0/\epsilon}
$. 
This conclude the proof. 
\end{subequations}
\end{proof}

\begin{Rem}
We can also prove the so called ``semilocal theorem," which states 
that  the approximation process is a Cauchy sequence in $\mathbb{S}^{2}_{\dimY} \times \mathbb{H}^2$ by the same argument. For the definition of "semilocal," 
refer to \cite{Yamamoto2001}. 
\end{Rem}

We can now prove our main result based on the following theorem.

\begin{Thm}\label{Thm: Newton-Kantorovitch BSDE}
Let $(X,Y,Z)$ be a solution of the FBSDE \eqref{fbsde1 decoupled}.
If Assumptions \ref{ass0} and \ref{ass0+1} hold and $(X_{0}, Y_{0}, Z_{0}) \in \mathbb{S}^{2}_{\dimX} \times
\mathbb{S}^{2}_{\dimY} \times \mathbb{H}^2$ with $X_{0}(0)=X(0)$, 
then, there exists a $C_3>0$ such that, for all $n \in \mathbb{N} \cup \{ 0\}$, we obtain 
\begin{equation}\label{Theorem: the linear convergence inequality BSDE}
\begin{split}
	&\| (X - X_{n+1}, Y - Y_{n+1}, Z - Z_{n+1} )\|^2  
	\\ &\leq  \epsilon^{n+1} C_3 \| (X - X_{0}, Y - Y_{0}, Z - Z_{0} ) \|^2, 
\end{split}
\end{equation}
where the constant $C_3 \equiv C_3 (\epsilon)$ is bounded by the coefficients and independent of $n$. 
\end{Thm}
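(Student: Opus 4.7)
The plan is to assemble the main theorem directly from the two convergence estimates already proved: Theorem \ref{convergence_SDE} for the forward component and Theorem \ref{convergence_BSDE} for the backward component. No new stochastic analysis is needed; the task reduces to transferring the weighted-norm estimate of Theorem \ref{convergence_BSDE} into the standard norm and combining the two bounds.

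First, I would invoke Theorem \ref{convergence_SDE} with the hypothesis $X_0(0) = X(0)$ to obtain
\[
\|X - X_{n+1}\|_{\mathbb{S}^2_{\dimX}}^2 \leq \epsilon^{n+1}\, e^{C_0 T/\epsilon}\, \|X - X_0\|_{\mathbb{S}^2_{\dimX}}^2,
\]
which already contains the desired geometric factor $\epsilon^{n+1}$. Next, I would apply Theorem \ref{convergence_BSDE} to bound $\|(Y-Y_{n+1}, Z-Z_{n+1})\|_\alpha^2$ by $\epsilon^{n+1}\{C_1 \|X-X_0\|_{\mathbb{S}^2_{\dimX}}^2 + \|(Y-Y_0, Z-Z_0)\|_\alpha^2\}$.

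Since $\alpha > 0$ and $s \in [0,T]$, the pointwise inequalities $1 \leq e^{\alpha s} \leq e^{\alpha T}$ give the norm equivalence
\[
\|(Y,Z)\|^2 \leq \|(Y,Z)\|_\alpha^2 \leq e^{\alpha T}\,\|(Y,Z)\|^2.
\]
Applying this equivalence on both sides of the Theorem \ref{convergence_BSDE} estimate yields
\[
\|(Y - Y_{n+1}, Z - Z_{n+1})\|^2 \leq \epsilon^{n+1}\bigl\{C_1 \|X - X_0\|_{\mathbb{S}^2_{\dimX}}^2 + e^{\alpha T}\|(Y-Y_0, Z-Z_0)\|^2\bigr\}.
\]

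Finally, I would add the forward and backward estimates, using the definition $\|(X,Y,Z)\|^2 = \|X\|_{\mathbb{S}^2_{\dimX}}^2 + \|(Y,Z)\|^2$, and set, for instance,
\[
C_3 \equiv C_3(\epsilon) = e^{C_0 T/\epsilon} + C_1 + e^{\alpha T},
\]
which depends only on $T$, $\epsilon$, and the uniform bounds on $b', \sigma', f', \varphi'$, but not on $n$. This immediately delivers \eqref{Theorem: the linear convergence inequality BSDE}. The only subtle point, and the main (but minor) obstacle, is ensuring that the conversion factor $e^{\alpha T}$ between the weighted and unweighted norms is absorbed cleanly into $C_3$ without interfering with the $\epsilon^{n+1}$ decay; this works precisely because $\alpha$ is a fixed finite constant determined by the data and $\epsilon$, and $T$ is a fixed deterministic terminal time, so $e^{\alpha T}$ is a single $n$-independent constant.
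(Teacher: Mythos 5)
Your proposal is correct and follows essentially the same route as the paper: combine the geometric estimate of Theorem \ref{convergence_SDE} for the forward component with the weighted-norm estimate of Theorem \ref{convergence_BSDE}, use $1 \leq e^{\alpha s} \leq e^{\alpha T}$ to pass between $\|\cdot\|_\alpha$ and the unweighted norm, and absorb the resulting $n$-independent factors into $C_3$. The only difference is cosmetic: the paper takes $C_3 = (C_1 + e^{C_0 T/\epsilon}) \vee e^{\alpha T}$ where you take the sum, which changes nothing of substance.
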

\begin{proof}
By inequality \eqref{convergence_SDE_1} in Theorem \ref{convergence_SDE} 
and by inequality \eqref{Theorem: the linear convergence inequality} in Theorem \ref{convergence_BSDE}, we obtain 
	\begin{equation*}
	\begin{split}
	\| X - X_{n+1}  \|^2 \leq \epsilon^{n+1}  e^{C_0T/\epsilon}\| X - X_{0}  \|^2, 
	\end{split}
	\end{equation*}
	and 
	\begin{equation*}
	\begin{split}
	\| (Y - Y_{n+1}, Z - Z_{n+1} ) \|^2
	&\leq \| (Y - Y_{n+1}, Z - Z_{n+1} ) \|_{\alpha}^2
	\\& \leq  \epsilon^{n+1} 
	\left\{ C_1 \| X - X_{0} \|_{{\mathbb{S}^2_{\dimX}}}^2 +\| (Y - Y_{0}, Z - Z_{0} ) \|_{\alpha}^2 \right\},
	\end{split}
	\end{equation*}
respectively, where 
	\begin{align*}
	&C_0 =8  c_{b,\sigma} T \exp(4 c_{b,\sigma} T), 
		\quad c_{b,\sigma}=\| b' \|_{\infty}+18\|\sigma' \|_{\infty}+\| \sigma' \|^2_{\infty}, 
	\\ &C_1 \equiv C_1(\epsilon)=\epsilon^{-1} \{1+\left( 1+4(2+TC_0) \| \varphi' \|_{\infty}^2 \right) \left(1+4c_{0}^2 \right)\} e^{\alpha T+C_0/\epsilon}, \quad  c_0=3, 
	\\&\alpha =2 \| {f}' \|_{\infty}+4 \| {f}' \|_{\infty}^2+12 \| {f}' \|_{\infty} \left(1+4 c_0^2
 \right) (1 \vee T) \epsilon^{-1}. 
	\end{align*}
Defining 
\[	
	C_3 =  \left\{ (C_1 + e^{C_0T/\epsilon}) \vee e^{\alpha T} \right\},
\]
concludes the proof. 
\end{proof}

This finally allows us to prove our main theorem, Theorem \ref{Theorem: the convergence of the Newton-Kantorovitch approximation process3}. 
\begin{proof}[Proof of Theorem \ref{Theorem: the convergence of the Newton-Kantorovitch approximation process3}]
By setting $\epsilon =1/2$, the desired result can be obtained from Theorem \ref{Thm: Newton-Kantorovitch BSDE}. 
\end{proof}

\section*{Acknowledgements}
We would like to express our sincere appreciation to T.~Yamada for suggesting 
a formulation of the Newton-Kantorovitch method for BSDEs. 
This work was supported by 
JSPS KAKENHI Grant Numbers 16J00894 and 17H06833.

\section*{References}
\bibliographystyle{amsplain}

\begin{thebibliography}{10}

\bibitem{Amano09zbMATH05574278}
Kazuo {Amano}, \emph{{A note on Newton's method for stochastic differential
  equations and its error estimate.}}, {Proc. Japan Acad., Ser. A} \textbf{85}
  (2009), no.~3, 19--21 (English).

\bibitem{Amano12zbMATH06028640}
\bysame, \emph{{Newton's method for stochastic differential equations and its
  probabilistic second-order error estimate.}}, {Electron. J. Differ. Equ.}
  \textbf{2012} (2012), 8 (English).

\bibitem{antonelli1993}
Fabio Antonelli, \emph{{Backward-Forward Stochastic Differential Equations}},
  Ann. Appl. Probab. \textbf{3} (1993), no.~3, 777--793.

\bibitem{BahlaliEddahbiandOuknine2017AOP}
Khaled Bahlali, M'hamed Eddahbi, and Youssef Ouknine, \emph{{Quadratic BSDE
  with $L^2$ -terminal data: Krylov's estimate, It\^o-Krylov's formula and
  existence results}}, {Annals of Probability} \textbf{45} (2017), no.~4,
  2377--2397.

\bibitem{benderZhang2008}
Christian Bender and Jianfeng Zhang, \emph{{Time discretization and Markovian
  iteration for coupled FBSDEs}}, Ann. Appl. Probab. \textbf{18} (2008), no.~1,
  143--177.

\bibitem{Delarue2002209}
Fran{\c c}ois Delarue, \emph{{On the existence and uniqueness of solutions to
  FBSDEs in a non-degenerate case}}, Stochastic Processes and their
  Applications \textbf{99} (2002), no.~2, 209 -- 286.

\bibitem{delarue2006}
Fran{\c c}ois Delarue and St{\'e}phane Menozzi, \emph{{A forward--backward
  stochastic algorithm for quasi-linear PDEs}}, Ann. Appl. Probab. \textbf{16}
  (2006), no.~1, 140--184.

\bibitem{KarouiPengel1997backward}
N.~El~Karoui, S.~Peng, and M.~C. Quenez, \emph{{Backward stochastic
  differential equations in finance}}, Math. Finance \textbf{7} (1997), no.~1,
  1--71. \MR{1434407}

\bibitem{HuPeng1995}
Y.~Hu and S.~Peng, \emph{{Solution of forward-backward stochastic differential
  equations}}, Probability Theory and Related Fields \textbf{103} (1995),
  no.~2, 273--283.

\bibitem{Ito1942}
Kiyosi It\^o, \emph{{Differential equations determining a Markov process.}},
  Journal of the Pan-Japanese Mathematical Colloquium \textbf{1077} (1942).

\bibitem{Karatzasbook}
Ioannis Karatzas and Steven~E. Shreve, \emph{{Brownian Motion and Stochastic
  Calculus (Graduate Texts in Mathematics)}}, 2nd ed., Springer, aug 1991.

\bibitem{KYnewton91zbMATH00019571}
Shigetoku Kawabata and Toshio Yamada, \emph{On {N}ewton's method for stochastic
  differential equations}, S\'eminaire de {P}robabilit\'es, {XXV}, Lecture
  Notes in Math., vol. 1485, Springer, Berlin, 1991, pp.~121--137. \MR{1187776}

\bibitem{Ma1994fourstepscheme}
Jin Ma, Philip Protter, and Jiongmin Yong, \emph{{Solving forward-backward
  stochastic differential equations explicitly --- a four step scheme}},
  Probability Theory and Related Fields \textbf{98} (1994), no.~3, 339--359.

\bibitem{FBSDEMaWuZhangZhangzbMATH06464848}
Jin {Ma}, Zhen {Wu}, Detao {Zhang}, and Jianfeng {Zhang}, \emph{{On
  well-posedness of forward-backward SDEs -- a unified approach.}}, {Ann. Appl.
  Probab.} \textbf{25} (2015), no.~4, 2168--2214 (English).

\bibitem{MaZhang2012}
Jin Ma, Hong Yin, and Jianfeng Zhang, \emph{{On non-Markovian forward--backward
  SDEs and backward stochastic PDEs}}, Stochastic Processes and their
  Applications \textbf{122} (2012), no.~12, 3980 -- 4004.

\bibitem{Ortega1970MR0273810}
J.~M. Ortega and W.~C. Rheinboldt, \emph{{Iterative solution of nonlinear
  equations in several variables}}, Classics in Applied Mathematics, vol.~30,
  Society for Industrial and Applied Mathematics (SIAM), Philadelphia, PA,
  2000, Reprint of the 1970 original. \MR{1744713}

\bibitem{Ouknine1993doi:10.1080/17442509308833863}
Y.~Ouknine, \emph{Approximation de {N}ewton pour les \'equations
  diff\'erentielles stochastiques}, Stochastics and Stochastic Reports
  \textbf{45} (1993), no.~3-4, 237--247.

\bibitem{pardouxpen1990adapted}
Etienne Pardoux and Shige Peng, \emph{{Adapted solution of a backward
  stochastic differential equation}}, Systems \& Control Letters \textbf{14}
  (1990), no.~1, 55--61.

\bibitem{Pardoux2014zbMATH06276084}
Etienne {Pardoux} and Aurel {R\u{a}\c{s}canu}, \emph{{Stochastic differential
  equations, backward SDEs, partial differential equations.}}, Cham: Springer,
  2014 (English).

\bibitem{PardouxTang1999}
Etienne Pardoux and Shanjian Tang, \emph{{Forward-backward stochastic
  differential equations and quasilinear parabolic PDEs}}, Probability Theory
  and Related Fields \textbf{114} (1999), no.~2, 123--150.

\bibitem{PengWu1999Themethodofcontinuation}
Shige Peng and Zhen Wu, \emph{{Fully Coupled Forward-Backward Stochastic
  Differential Equations and Applications to Optimal Control}}, SIAM Journal on
  Control and Optimization \textbf{37} (1999), no.~3, 825--843.

\bibitem{Vidossich1978188}
Giovanni Vidossich, \emph{{Chaplygin's method is Newton's method}}, Journal of
  Mathematical Analysis and Applications \textbf{66} (1978), no.~1, 188 -- 206.

\bibitem{Monika2012MR2967195}
Monika Wrzosek, \emph{{Newton's method for stochastic functional differential
  equations}}, Electron. J. Differential Equations (2012), No. 130, 10.
  \MR{2967195}

\bibitem{Yamamoto2001}
Tetsuro Yamamoto, \emph{Historical developments in convergence analysis for
  {N}ewton's and {N}ewton-like methods}, J. Comput. Appl. Math. \textbf{124}
  (2000), no.~1-2, 1--23, Numerical analysis 2000, Vol. IV, Optimization and
  nonlinear equations. \MR{1803291}

\bibitem{Yong1997methodofcontinuation}
Jiongmin Yong, \emph{{Finding adapted solutions of forward--backward stochastic
  differential equations: method of continuation}}, Probability Theory and
  Related Fields \textbf{107} (1997), no.~4, 537--572.

\bibitem{Niwa2003}
Niwa Yoshiyuki, \emph{{Newton's method for backward stochastic differential
  equations}}, Master's thesis, {Ritsumeikan university (Japanese)}, 2003.

\bibitem{Zhang2017zbMATH06738088}
Jianfeng {Zhang}, \emph{{Backward stochastic differential equations. From
  linear to fully nonlinear theory.}}, New York, NY: Springer, 2017 (English).

\end{thebibliography}
\providecommand{\bysame}{\leavevmode\hbox to3em{\hrulefill}\thinspace}
\providecommand{\MR}{\relax\ifhmode\unskip\space\fi MR }
\providecommand{\MRhref}[2]{%
  \href{http://www.ams.org/mathscinet-getitem?mr=#1}{#2}
}
\providecommand{\href}[2]{#2}

\end{document}